\renewcommand{\ge}{\geqslant}
\renewcommand{\le}{\leqslant}
\newlength{\bulletRaiseLen}
\newtheorem{theorem}{Theorem}[section]
\newtheorem*{theorem-nn}{Theorem}
\newtheorem{lemma}[theorem]{Lemma}
\newtheorem{proposition}[theorem]{Proposition}
\theoremstyle{remark}
\newtheorem{remark}[theorem]{Remark}
\DeclareRobustCommand{\warning}{\fontencoding{U}\fontfamily{futs}\selectfont\char 66\relax}
\DeclareRobustCommand{\wsign}{\raisebox{0.4mm}{\warning}\,}
\newcommand{\+}{\mkern1.5mu}
\renewcommand{\emph}[1]{{\bf #1}}
\newcommand{\Lem}{\mathrm{Lem}\,}
\newcommand{\dist}{\mathrm{dist}}
\newcommand{\tileable}{\mathcal{T}}
\newcommand{\es}{\varnothing}
\newcommand{\acts}{\curvearrowright}
\newcommand{\mff}{\mathfrak{F}}
\def\oer{\@ifnextchar[{\oer@i}{\oer@i[]}}
\def\oer@i[#1]#2{\mathsf{E}^{#1}_{#2}}
\def\rgap{\@ifnextchar[{\rgap@i}{\rgap@i[ ]}}
\def\rgap@i[#1]{\mathrm{ga}\vec{\mathrm{p}}_{#1}}
\begin{document}
\title{Cross sections of Borel flows\\ with restrictions on the distance set}
\keywords{Borel flow, suspension flow, cross section}

\author{Konstantin Slutsky}
\address{Department of Mathematics, Statistics, and Computer Science\\
University of Illinois at Chicago\\
322 Science and Engineering Offices (M/C 249)\\
851 S. Morgan Street\\
Chicago, IL 60607-7045}
\email{kslutsky@gmail.com}
\begin{abstract}
  Given a set of positive reals, we provide a necessary and sufficient condition for a free Borel
  flow to admit a cross section with all distances between adjacent points coming from this set.
\end{abstract}

\maketitle

\section{Introduction}
\label{sec:introduction}

This paper completes the study initiated in \cite{slutsky2015}, where a Borel version of D.~Rudolph's
\cite{rudolph_two-valued_1976} two-step suspension flow representation is given.  The main result of
the current work is a criterion for a given set \( S \subseteq \mathbb{R}^{>0} \) and a free Borel
flow \( \mff \) to admit a cross section with distances between adjacent points belonging to
\( S \).

A cross section for a flow leads to a representation of the flow as a flow under a function  (see
Figure~\ref{fig:flow-under-function} and 
\cite[Section~7]{MR2963410}, \cite[Section~2]{slutsky2015}).  Properties of the flow are
reflected in the properties of the base automorphism, but details of their interplay are obscured by
the gap function.  To get a more transparent connection between the flow and the base automorphism,
it is often desirable to impose restrictions on the distances between adjacent points in the cross
section.

\begin{wrapfigure}[11]{L}{5.5cm}
  \centering
  \begin{tikzpicture}
    \draw[thick] plot [smooth, tension=1] coordinates { (0,2) (1.5,2.5) (3,2) (4.5,2.2)};
    \draw[thick] (0,0) -- (4.5,0);
    \filldraw (1.7,1) circle (1pt);
    \draw [->,>=stealth] (1.7,1) -- (1.7,2.47);
    \draw [dotted] (1.7,1) -- (1.7,0);
    \filldraw (1.7,0) circle (1pt);
    \filldraw (3.3,0) circle (1pt);
    \draw [->,>=stealth] (3.3,0) -- (3.3, 1.0);
    \draw (-0.1,-0.26) node[anchor=west] {cross section \( \mathcal{C} \)};
    \draw (3.35, 1.7) node {gap function \( f \)};
    \draw[->, >=stealth] (1.7, 0.02) [out=35, in=145] to  (3.27, 0.02);
    \draw (2.5, 0.47) node {\( \phi_{\mathcal{C}} \)};
  \end{tikzpicture}
  \caption{}
  \label{fig:flow-under-function}
\end{wrapfigure}

Of particular importance here are cross sections with only two distinct distances between adjacent
points.  Their existence in the sense of ergodic theory was proved in
\cite{rudolph_two-valued_1976}, and they were used to resolve a problem of Sinai on equivalence of
two definitions of \( K \)-flows.  Further improvement of Rudolph's construction by U.~Krengel
\cite{MR0435354} gave a version of Dye's Theorem for ergodic flows.  The Borel version of these
results obtained in \cite{slutsky2015}, gives a short proof of the analog of the R.~Dougherty,
S.~Jackson, A.~S.~Kechris \cite{MR1149121} classification of Borel flows up to Lebesgue orbit
equivalence (see Theorem 10.4 in \cite{slutsky2015} and Theorem 9.1 in \cite{2015arXiv150400958S}).
We hope that constructions of cross sections in the present paper will be useful in further
explorations of connections between properties of flows and automorphisms they induce on cross
sections.

A Borel flow is a Borel measurable action of \( \mathbb{R} \) on a standard Borel space
\( \Omega \).  Actions are denoted additively: \( \omega + r \) denotes the action of
\( r \in \mathbb{R} \) upon \( \omega \in \Omega \).  A cross section for a flow
\( \mathbb{R} \acts \Omega \) is a Borel set \( \mathcal{C} \subseteq \Omega \) that intersects
every orbit in a non-empty lacunary set (``lacunarity'' means existence of
\( c \in \mathbb{R}^{>0} \) such that for any \( x \in \mathcal{C} \) and
\( r \in \mathbb{R}^{>0} \) inclusion \( x + r \in \mathcal{C}\setminus\{x\} \) implies
\( r \ge c \)).  Existence of cross sections was first shown by
V.~M.~Wagh~\cite{wagh_descriptive_1988}, improving upon earlier works of W.~Ambrose and S.~Kakutani
\cite{ambrose_representation_1941}, \cite{MR0005800}.  When the flow is free, every orbit becomes an
affine copy of \( \mathbb{R} \), and any translation invariant notion can therefore be transferred
from \( \mathbb{R} \) onto orbits of the flow.  In particular, given two points
\( \omega_{1}, \omega_{2} \in \Omega \) within the same orbit one may naturally define the distance
\( \dist(\omega_{1}, \omega_{2}) \) between them.

We always assume that our flows are free and cross sections are ``bi-infinite'' on each orbit
\textemdash{} if \( \mathcal{C} \subseteq \Omega \) is a cross section, then every
\( x \in \mathcal{C} \) has a successor and a predecessor among elements of \( \mathcal{C} \) from
the same orbit.  This allows us to endow \( \mathcal{C} \) with an induced automorphism
\( \phi_{\mathcal{C}} : \mathcal{C} \to \mathcal{C} \) which sends a point to the next one.  We also
let \( \rgap[\mathcal{C}] : \mathcal{C} \to \mathbb{R}^{>0} \) to denote the gap function which
measures distance to the next point:
\( \rgap[\mathcal{C}](x) = \dist\bigl(x, \phi_{\mathcal{C}}(x)\bigr) \).

Given a non-empty set \( S \subseteq \mathbb{R}^{>0} \), we say that a cross section
\( \mathcal{C} \) is \( S \)-regular if \( \rgap[\mathcal{C}](x) \in S \) for all
\( x \in \mathcal{C} \), i.e., if the distances between adjacent points in \( \mathcal{C} \)
belong to \( S \).  The following was proved in \cite{slutsky2015}: Given any two positive
rationally independent reals \( \alpha, \beta \in \mathbb{R}^{>0}\), any free Borel flow admits an
\( \{\alpha,\beta\} \)-regular cross section.  In this paper we push the methods of
\cite{slutsky2015} a little further and for a given \( S \subseteq \mathbb{R}^{>0} \) give a
criterion for a flow to admit an \( S \)-regular cross section.  Recall that a flow is said to be
sparse if it admits a cross section with gaps ``bi-infinitely'' unbounded on each orbit.  A subgroup
of \( \mathbb{R} \) generated by \( S \) is denoted by \( \langle S \+ \rangle \).

\begin{theorem-nn}[see Theorem \ref{thm:main-theorem}]
  Let \( \mff \) be a free Borel flow on a standard Borel space \( X \) and let
  \( S \subseteq \mathbb{R}^{>0} \) be a set bounded away from zero.
  \begin{enumerate}[(I)]
  \item\label{item:lambda-regular-mt}   Assume  \(   \langle S \+ \rangle   =  \lambda\mathbb{Z}   \),
    \( \lambda > 0 \). The flow \( \mff \) admits an \( S \)-regular cross section if and only if it
    admits a \( \{\lambda\} \)-regular cross section.
  \item\label{item:dense-regular-mt} Assume \( \langle S \+ \rangle\) is dense in \( \mathbb{R} \), but
    \( \langle S \cap [0, n] \rangle = \lambda_{n} \mathbb{Z} \), \( \lambda_{n} \ge 0 \), for all
    natural \( n \in \mathbb{N} \) (we take \( \lambda_{n} = 0 \) if \( S \cap [0,n] \) is empty).
    The flow \( \mff \) admits an \( S \)-regular cross section if and only if the phase space
    \( X \) can be partitioned into \( \mff \)-invariant Borel pieces (some of which may be empty)
    \[ X = \Bigl(\bigsqcup_{i=0}^{\infty} X_{i}\Bigr) \sqcup X_{\infty} \]
    such that \( \mff|_{X_{\infty}} \) is sparse and \( \mff|_{X_{i}} \) admits a
    \( \{\lambda_{i}\} \)-regular cross section.
  \item\label{item:bounded-dense-regular-mt} Assume there \( n \in \mathbb{N} \) such that
    \( \langle S \cap [0,n] \rangle \) is dense in \( \mathbb{R} \).  Any flow admits an
    \( S \)-regular cross section.
  \end{enumerate}
\end{theorem-nn}

To further explore item \eqref{item:lambda-regular-mt}, it is, perhaps, helpful to recall a
criterion of W.~Ambrose \cite{ambrose_representation_1941} (see also \cite[Proposition
2.5]{slutsky2015}) for a flow to admit a \( \{\lambda\} \)-regular cross section.
\begin{proposition}
  \label{prop:Ambrose}
  A free Borel flow \( \mff \) on \( X \) admits a cross section with all gaps of  size
  \( \lambda > 0 \) if and only if there is a Borel function
  \( f : X \to \mathbb{C}\setminus\{0\} \) such that
  \[ f(x + r) = e^{\textstyle \frac{2 \pi i r}{\lambda}} f(x)\ \textrm{ for all } x \in X \textrm{ and } r \in
  \mathbb{R}.\]
\end{proposition}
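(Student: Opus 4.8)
The plan is to prove both implications by unwinding the correspondence between a \( \{\lambda\} \)-regular cross section and an \( \mathbb{R} \)-equivariant Borel map from \( X \) to the circle \( \mathbb{R}/\lambda\mathbb{Z} \), where \( \mathbb{R} \) acts on the target by translation. The function \( f \) in the statement is, up to the irrelevant nowhere-vanishing positive factor \( |f| \), precisely such a map composed with \( t \mapsto e^{2\pi i t/\lambda} \): indeed the hypothesis \( f(x+r) = e^{2\pi i r/\lambda} f(x) \) says exactly that \( x \mapsto \tfrac{\lambda}{2\pi}\arg f(x) \in \mathbb{R}/\lambda\mathbb{Z} \) is Borel and equivariant. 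So both directions reduce to producing one of these two objects from the other.

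For the forward direction, assume \( \mathcal{C} \) is a \( \{\lambda\} \)-regular cross section. Since \( \mathcal{C} \) is bi-infinite with all gaps equal to \( \lambda \), it meets the orbit of any \( x \in X \) in a single coset of \( \lambda\mathbb{Z} \); hence there is a unique \( t(x) \in [0,\lambda) \) with \( x - t(x) \in \mathcal{C} \). I would first check that \( t : X \to [0,\lambda) \) is Borel: the set \( \{(x,s) : 0 \le s < \lambda,\ x - s \in \mathcal{C}\} \) is Borel with singleton vertical sections, so it is the graph of a Borel function. Then \( f(x) := e^{2\pi i t(x)/\lambda} \) is Borel and lands on the unit circle, so in \( \mathbb{C}\setminus\{0\} \); and since \( (x+r) - \bigl(t(x)+r\bigr) = x - t(x) \in \mathcal{C} \), the coset structure gives \( t(x+r) \equiv t(x) + r \pmod{\lambda} \), whence \( f(x+r) = e^{2\pi i r/\lambda} f(x) \).

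For the converse, given such an \( f \), set \( \mathcal{C} := f^{-1}(\mathbb{R}^{>0}) = \{x \in X : f(x) \text{ is a positive real}\} \), a Borel subset of \( X \). Fixing \( x \) and writing \( f(x) = \rho e^{i\theta} \) with \( \rho > 0 \), the equivariance gives \( f(x+r) = \rho\, e^{i(\theta + 2\pi r/\lambda)} \), which lies in \( \mathbb{R}^{>0} \) exactly when \( r \) lies in the arithmetic progression \( -\tfrac{\lambda\theta}{2\pi} + \lambda\mathbb{Z} \). Thus \( \mathcal{C} \) meets every orbit in a nonempty bi-infinite set whose consecutive points are at distance exactly \( \lambda \); in particular \( \mathcal{C} \) is lacunary with constant \( \lambda \) and \( \rgap[\mathcal{C}] \equiv \lambda \), so it is a \( \{\lambda\} \)-regular cross section.

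The only genuinely technical point is the Borel measurability of the ``time since the last visit to \( \mathcal{C} \)'' function \( t \) in the forward direction; this is a standard consequence of the Lusin–Novikov uniformization theorem (a Borel relation with singleton sections is a Borel graph) together with the measurability of the flow, and it is already implicit in the basic theory of cross sections used in \cite{slutsky2015}. Everything else is a direct computation with the equivariance relation, so I do not expect any serious obstacle.
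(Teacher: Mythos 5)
Your proof is correct. Note that the paper does not prove Proposition \ref{prop:Ambrose} at all: it is recalled as Ambrose's criterion with a citation to \cite{ambrose_representation_1941} and to Proposition 2.5 of \cite{slutsky2015}, and your argument is precisely the standard one behind that citation, namely the identification of a \( \{\lambda\} \)-regular cross section with a Borel \( \mathbb{R} \)-equivariant map \( X \to \mathbb{R}/\lambda\mathbb{Z} \) (the ``time since the last visit'' function \( t \) in one direction, the phase of \( f \) in the other). You also correctly isolate and dispose of the only technical point: the set \( \{(x,s) \in X \times [0,\lambda) : x - s \in \mathcal{C}\} \) is Borel with singleton vertical sections, hence the graph of a Borel function, so \( t \) and therefore \( f(x) = e^{2\pi i t(x)/\lambda} \) are Borel; conversely \( f^{-1}(\mathbb{R}^{>0}) \) meets each orbit in a coset of \( \lambda\mathbb{Z} \) by the equivariance computation, which gives lacunarity with constant \( \lambda \) and all gaps equal to \( \lambda \).
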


The paper is concluded with an example of a flow which shows that the condition in item
\eqref{item:dense-regular} of the main theorem is not vacuous.

\subsection{Notations}
\label{sec:notations}

The following notations are used throughout the paper.  For a set \( S \subseteq \mathbb{R}^{>0} \)
and a cross section \( \mathcal{C} \), \( \oer[S]{\mathcal{C}} \) denotes the equivalence relations
defined by
\[ x\, \oer[S]{\mathcal{C}}\, y \iff \exists n \in \mathbb{N}\ \phi_{\mathcal{C}}^{n}(x) = y
\textrm{ and } \rgap[\mathcal{C}]\bigl(\phi_{\mathcal{C}}(x)^{k}\bigr) \in S \textrm{ for all }
0\le k < n,\]
or the same condition with roles of \( x \) and \( y \) switched.  In plain words, \( x\,
\oer[S]{\mathcal{C}}\, y \) if all the gaps, when going from \( x \) to \( y \) in \( \mathcal{C} \),
belong to \( S \).  To say that \( \mathcal{C} \) is \( S \)-regular is the same as to say that \(
\oer[S]{\mathcal{C}} \) coincides with the orbit equivalence relation induced on \( \mathcal{C} \).  We
also let \( \oer[\le K]{\mathcal{C}} \) denote the relation \( \oer[{[}0,K{]}]{\mathcal{C}} \).

A set \( S \subseteq \mathbb{R} \) is said to be \( \epsilon \)-dense in an interval
\( I \subseteq \mathbb{R} \) if for every open sub-interval \( J \subseteq I \) of length
\( \epsilon \) the intersection \( J \cap S \) is non-empty.  An \( \epsilon \)-neighborhood
\( (x - \epsilon, x + \epsilon) \) of \( x \in \mathbb{R} \) is denoted by
\( \mathcal{U}_{\epsilon}(x) \).  For a set \( S \subseteq \mathbb{R}^{>0} \), the semigroup
generated by \( S \) is denoted by \( \tileable(S) \):
\[ \mathcal{T}(S) = \Bigl\{\, \sum_{k=1}^{n}s_{k} \,\Bigm|\, n \ge 1,\ s_{k} \in S \,\Bigr\}. \]
The group generated by \( S \) is, as usually, denoted by \( \langle S \+ \rangle \).  We say that a
set \( S \subseteq \mathbb{R}^{\ge 0} \) is asymptotically dense in \( \mathbb{R} \) if for every
\( \epsilon > 0 \) there is \( K \ge 0 \) such that \( S \) is \( \epsilon \)-dense in
\( [K,\infty) \).

\section{Regular cross sections of sparse flows}
\label{sec:rudolphs-method}

\begin{lemma}
  \label{lem:dense-subgroup-asymp-dense-semigroup}
  Let \( S \subseteq \mathbb{R}^{>0} \) be a non-empty subset.  The following are equivalent.
  \begin{enumerate}[(i)]
  \item\label{item:dense-subgroup} \( \langle S \+ \rangle \) is dense in \( \mathbb{R} \).
  \item\label{item:finite-subset-eps-dense} For every \( \epsilon > 0 \) there exists a finite
    \( F \subseteq S \) and \( K \in \mathbb{R}^{\ge 0} \) such that \( \tileable(F) \) is
    \( \epsilon \)-dense in \( [K, \infty) \).
  \item\label{item:asymptotically-dense} \( \tileable(S) \) is asymptotically dense in
    \( \mathbb{R} \).
  \end{enumerate}
\end{lemma}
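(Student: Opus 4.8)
The cleanest logical cycle is (iii) $\Rightarrow$ (i) $\Rightarrow$ (ii) $\Rightarrow$ (iii); I will sketch each arrow.

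For (iii) $\Rightarrow$ (i): if $\mathcal{T}(S)$ is asymptotically dense, then in particular for every $\epsilon > 0$ there are elements of $\mathcal{T}(S)$ within $\epsilon$ of each other (take two points in an $\epsilon$-dense stretch of a long interval), and their difference lies in $\langle S \rangle$ and has absolute value less than $\epsilon$. A subgroup of $\mathbb{R}$ containing arbitrarily small positive elements is dense, so $\langle S \rangle$ is dense.

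For (i) $\Rightarrow$ (ii): this is the substantive step. Fix $\epsilon > 0$. Density of $\langle S \rangle$ gives finitely many elements $s_1,\dots,s_m \in S$ and integer coefficients $n_1,\dots,n_m \in \mathbb{Z}$ with $0 < \sum n_k s_k < \epsilon/2$; call this value $\delta$. I also pick one more element $s_0 \in S$ (any element). Write $\delta = p - q$ where $p = \sum_{n_k > 0} n_k s_k \in \mathcal{T}(F)$ and $q = \sum_{n_k < 0}(-n_k)s_k$, with $F = \{s_0, s_1, \dots, s_m\}$; note $q \in \mathcal{T}(F)$ as well (or $q = 0$, handled trivially). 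The idea is the standard "coin problem" / numerical-semigroup argument: from the single small step $\delta$ together with $q$ and $s_0$, I can reach every point of a coset-lattice pattern beyond some threshold. Concretely, for any target $t$ large enough, choose $N$ so that $Nq \le t < Nq + \delta$ roughly, then adjust by adding copies of $\delta = p - q$: the sum $a\cdot p + b\cdot q$ with suitable nonnegative integers $a,b$ (so that it is a genuine element of $\mathcal{T}(F)$) can be made to land within $\delta < \epsilon$ of $t$. Throwing in multiples of $s_0$ gives enough slack to keep all coefficients nonnegative once $t \ge K$ for an explicit threshold $K$ depending on $\epsilon$, $p$, $q$, $s_0$. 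So $\mathcal{T}(F)$ is $\epsilon$-dense in $[K,\infty)$.

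For (ii) $\Rightarrow$ (iii): trivial, since $\mathcal{T}(F) \subseteq \mathcal{T}(S)$, so asymptotic density of the finitely-generated subsemigroup immediately upgrades to asymptotic density of $\mathcal{T}(S)$ (the witness $K$ for each $\epsilon$ is the one provided by (ii)).

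The main obstacle is the bookkeeping in (i) $\Rightarrow$ (ii): turning the integer combination $\sum n_k s_k$, which a priori has negative coefficients, into genuine semigroup elements (only nonnegative coefficients allowed) while controlling the threshold $K$ uniformly. The trick is that once we are allowed to go out to $[K,\infty)$ we have "room to spare" — we can pre-pay with large nonnegative multiples of the $s_k$ and of $s_0$ and then fine-tune with the small step $\delta$ — so the negativity is absorbed. I would organize this as: (a) produce the small positive $\delta \in \langle F\rangle$; (b) observe $\mathcal{T}(F)$ contains an arithmetic-progression-like net with common difference $\delta$ above some point, by repeatedly adding $\delta$ (realized as $+p$ then compensating, or more simply by noting $\mathcal{T}(F) \ni x \Rightarrow \mathcal{T}(F)\ni x + s_k$ for each $k$ and combining); (c) fill the gaps of size $<\epsilon/2$ between consecutive "rungs" using that $\delta<\epsilon/2$. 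I will keep the constants explicit only to the extent needed to see $K$ exists.
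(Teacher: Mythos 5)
Your proposal is correct, and its overall skeleton (prove the substantive implication and note the other two arrows are easy) matches the paper, but your argument for the key step (i)\(\implies\)(ii) takes a genuinely different route. The paper fixes \( \tilde{s} \in S \), chooses a finite \( \epsilon/2 \)-dense subset \( \tilde{F} \subseteq \langle S \+ \rangle \cap [0,\tilde{s}] \), writes each \( f \in \tilde{F} \) as an integer combination of a finite \( F \subseteq S \) with coefficients bounded by \( M \), and sets \( K = M \sum_{s \in F} s \); the single uniform shift by \( K \) absorbs all negative coefficients at once, so every \( K + f + r\tilde{s} \) lies in \( \tileable(F) \), and translating the fixed dense pattern \( K + \tilde{F} \) by multiples of \( \tilde{s} \) sweeps \( [K,\infty) \). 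You instead extract one small positive group element \( \delta = p - q < \epsilon/2 \) with \( p,q \) nonnegative combinations, and build the net from the two-parameter combinations \( jp + (N-j)q = Nq + j\delta \), \( 0 \le j \le N \): arithmetic progressions of step \( \delta \) covering \( [Nq, Np] \), which overlap for \( N \ge q/\delta \) and hence are \( \epsilon \)-dense on a half-line \textemdash{} the classical numerical-semigroup argument. Your version asks less of density (a single small element rather than an \( \epsilon/2 \)-dense finite subset of an interval) at the price of the overlap bookkeeping you flag, whereas the paper's shift-by-\( K \) trick gives an immediate explicit threshold; note also that your auxiliary element \( s_{0} \) is never actually needed (the progression \( Nq + j\delta \) already has nonnegative coefficients), and in (iii)\(\implies\)(i) you should take the two nearby elements of \( \tileable(S) \) from disjoint adjacent intervals so their difference is nonzero.
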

\begin{proof}
  Implications \eqref{item:asymptotically-dense} \( \implies \) \eqref{item:dense-subgroup} and
  \eqref{item:finite-subset-eps-dense} \( \implies \) \eqref{item:asymptotically-dense} are
  obvious.  We prove \eqref{item:dense-subgroup} \( \implies \)
  \eqref{item:finite-subset-eps-dense}.
  
  Suppose \( S \) generates a dense subgroup.  Pick an element \( \tilde{s} \in S \) and an
  \( 0 < \epsilon < \tilde{s}/2 \).  Select a finite set
  \( \tilde{F} \subseteq \langle S \+ \rangle \) such that \( \tilde{F} \subseteq [0,\tilde{s}] \)
  and \( \tilde{F} \) is \( \epsilon/2 \)-dense in \( [0,\tilde{s}] \).  Let
  \( F = \{s_{k}\}_{k=0}^{n} \subseteq S \) be a finite set such that \( s_{0} = \tilde{s} \) and
  any \( f \in \tilde{F} \) is of the form \( f = \sum_{k=0}^{n}a_{f,k}s_{k} \) for some
  \( a_{f,k} \in \mathbb{Z} \), \( k \le n \).  Such coefficients \( a_{f,k} \) may not be unique,
  for each \( f \in \tilde{F}\) we fix one such decomposition.  Let \( M = \max_{f,k}|a_{f,k}| \)
  and take \( K = M \cdot \sum_{k=0}^{n} s_{k} \).  We claim that \( \mathcal{T}(F) \) is
  \( \epsilon \)-dense in \( [K,\infty) \).  Indeed, take
  \( \mathcal{U}_{\epsilon/2}(y) \subseteq [K, \infty] \), and pick \( r \in \mathbb{N} \) such
  that \( y - r \tilde{s} \in [K, K+\tilde{s}) \).  Since
  \[ \textrm{either } (y, y + \epsilon/2) - r\tilde{s} \subseteq [K, K+\tilde{s}) \textrm{ or } (y-
  \epsilon/2, y) - r\tilde{s} \subseteq [K, K+\tilde{s}), \]
  one may find
  \( f \in \tilde{F} \) such that \( f + r\tilde{s} + K  \in \mathcal{U}_{\epsilon/2}(y)  \)  Since
  \( f + K \in \tileable(F) \) and \( r\tilde{s} \in \tileable(F) \), we get \( f + r \tilde{s} + K
  \in \tileable(S) \), and so \( \tileable(F) \) is \( \epsilon \)-dense in \( [K,\infty) \).
\end{proof}

\begin{theorem}
  \label{thm:regular-S-sections-for-sparse-flows}
  Let \( S \subseteq \mathbb{R}^{>0} \) be a non-empty set bounded away from zero.  If
  \( \langle S \+ \rangle \) is dense in \( \mathbb{R} \), then any sparse flow admits an
  \( S \)-regular cross section.
\end{theorem}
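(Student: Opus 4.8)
The idea is to use sparseness to produce arbitrarily long gaps, fill them with $S$-chains built from a finite sub-alphabet of $S$, and then stitch these chains together along each orbit, repairing mismatches at successively larger scales. Fix $\epsilon>0$. The dense-subgroup hypothesis and Lemma~\ref{lem:dense-subgroup-asymp-dense-semigroup} give a finite $F\subseteq S$ and $K\ge 0$ with $\tileable(F)$ being $\epsilon$-dense in $[K,\infty)$. Since $F$ is bounded away from $0$, the set $\tileable(F)\cap[0,L]$ is finite for every $L$; hence every $t\in\tileable(F)$ has only finitely many decompositions $t=f_{1}+\dots+f_{m}$ ($f_{j}\in F$), and these, together with nearest-point selections in $\tileable(F)$, can be chosen by a Borel rule. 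It is therefore enough to build a cross section with all gaps in $\tileable(F)$: subdividing each such gap according to its chosen decomposition then produces an $S$-regular one.

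\textbf{Large gaps and a toast.} Let $\mathcal C$ be a cross section with gaps bi-infinitely unbounded on each orbit. Applying the marker lemma to the $\mathbb Z$-system $(\mathcal C,\phi_{\mathcal C})$ and thinning out points, I would obtain a decreasing chain $\mathcal C\supseteq\mathcal C_{0}\supseteq\mathcal C_{1}\supseteq\dots$ with $\bigcap_{n}\mathcal C_{n}=\es$ and with all gaps of $\mathcal C_{n}$ at least $M_{n}$, for a rapidly growing sequence $M_{n}$ with $M_{0}\gg K$ (bi-infinite unboundedness survives each thinning, so the construction does not stall). On each orbit, $\mathcal C_{n}$ cuts out ``level-$n$ blocks'', each a union of level-$(n-1)$ blocks and each of length $\ge M_{n}$.

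\textbf{Filling and repairing.} In each level-$0$ block, of length $\ell\ge M_{0}\gg K$, place finitely many points forming an $F$-chain that fills the block to within $\epsilon$ at both ends — possible since $\tileable(F)$ is $\epsilon$-dense in $[K,\infty)$. These chains need not agree across the endpoints of level-$0$ blocks, so, passing from level $n$ to level $n+1$, one re-tiles a neighbourhood of each interior level-$n$ boundary inside a level-$(n+1)$ block, using the room $M_{n+1}\gg K$ to connect chain-points lying far enough to either side by an $F$-chain. If the stage-$n$ repairs are arranged to displace points by at most $O(\epsilon\,2^{-n})$, the $\mathcal C_{n}$ converge pointwise along each orbit (here $\bigcap_{n}\mathcal C_{n}=\es$ is used), and the limit is a Borel cross section with all gaps in $\tileable(F)$, as wanted.

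\textbf{The main obstacle.} The crux — and the place where sparseness is essential — is exactly this reconciliation across block boundaries. A long interval essentially never admits an $S$-chain ending precisely at a prescribed point, so each local chain carries a small real ``phase defect'', and a single gap from $S$ cannot absorb such a defect; the naive remedy of threading one chain along the whole orbit would require propagating the defect from $-\infty$, which is neither well defined nor Borel. Making the repairs genuinely local — so that the final cross section near a point depends on only boundedly much of the orbit — while keeping the successive corrections summably small, is the technical heart of the argument; the arbitrarily large gaps supplied by sparseness, organised by the nested sequence $\mathcal C_{n}$, are precisely what make these local corrections feasible.
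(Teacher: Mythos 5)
You have the right overall architecture (a multi-scale construction with small corrections, a limiting cross section, then subdividing $\tileable$-gaps into $S$-gaps, which is how the paper also finishes), but the two steps you leave open are exactly where the paper does its work, and as stated they do not go through. First, fixing one finite $F\subseteq S$ with $\tileable(F)$ merely $\epsilon$-dense in $[K,\infty)$ cannot yield stage-$n$ displacements of size $O(\epsilon 2^{-n})$: nothing forces $\langle F\+\rangle$ to be dense (it may be $\lambda\mathbb{Z}$ with $\lambda$ just under $\epsilon$), so the correction needed to push a given gap into $\tileable(F)$ is bounded below by a fixed constant at every stage, the displacements are not summable, and the limit cross section does not exist. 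The paper avoids this by keeping all of $S$ (after passing to a countable subset generating a dense subgroup) and exploiting asymptotic density with vanishing error: a Borel $\xi$ with $x+\xi(x)\in\tileable(S)$ and $\xi(x)\to 0$ as $x\to\infty$, together with scales $K_{n}$ chosen so that $|\xi(x)|<\epsilon_{n}$ for $x\ge K_{n}-2$. In your language the finite alphabet must grow with the stage; a single pair $(F,\epsilon)$ cannot serve all stages.

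Second, your repair step — re-tiling a neighbourhood of an interior block boundary so as to ``connect chain-points lying far enough to either side by an $F$-chain'' — asks for an exact chain between two prescribed points, which is precisely the impossibility you yourself call the phase defect: the distance between two fixed points lies in the countable set $\tileable(F)$ only exceptionally, and displacing a few points near the boundary merely relocates the defect to the adjacent gap. The missing idea is the paper's rigidity mechanism: one never bridges between two fixed configurations; instead the shift $h_{n+1}$ is constant on each $\oer{n}$-class, so an entire previously built block is translated rigidly until the single gap between consecutive blocks lands in $\tileable(S)$, while all gaps inside blocks are carried along unchanged, and the needed shift is $<\epsilon_{n}$ only because inter-block gaps exceed $K_{n}-1$, where $\xi$ is that small. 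With growing alphabets (equivalently, the function $\xi$ and the scales $K_{n}$) and block-rigid shifts, your scheme becomes essentially the paper's proof; without them, the reconciliation across boundaries that you call the technical heart is left unproved. (Also, convergence of the construction comes from summability of the displacements, not from $\bigcap_{n}\mathcal{C}_{n}=\es$.)
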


\begin{proof}
  Let \( \mff \) be a free sparse Borel flow on a standard Borel space \( \Omega \), and let
  \( S \subseteq \mathbb{R}^{>0} \) be such that \( \langle S \+ \rangle \) is dense in
  \( \mathbb{R} \).  It is easy to see that if \( \langle S \+ \rangle \) is dense in \( \mathbb{R} \),
  then there is a countable (possibly finite) subset \( S' \subseteq S \) which also generates a dense
  subgroup of \( \mathbb{R} \), and we may therefore assume without loss of generality that \( S \)
  is countable.
  
  By Lemma \ref{lem:dense-subgroup-asymp-dense-semigroup}, the semigroup \( \mathcal{T}(S) \) is
  asymptotically dense in \( \mathbb{R}^{>0} \), and so there exists a function
  \( \xi : \mathbb{R}^{>0} \to \mathbb{R} \) such that \( x + \xi(x) \in \mathcal{T}(S) \) and
  \( \xi(x) \to 0 \) as \( x \to +\infty \).  Such a function can be picked Borel.
  Set \( \epsilon_{n} = 2^{-n-1}/3 \), and let \( (K_{n})_{n=0}^{\infty} \) be an
  increasing sequence, \( K_{n+1} > K_{n} + 1 \), such that \( |\xi(x)| < \epsilon_{n} \) for all
  \( x \ge K_{n} - 2 \).

  We construct cross sections \( \mathcal{C}_{n} \), Borel functions \( h_{n+1} : \mathcal{C}_{n}
  \to (-\epsilon_{n}, \epsilon_{n}) \), and finite Borel equivalence relations \( \oer{n} \) on \(
  \mathcal{C}_{n} \) which will satisfy the following list of properties.
  \begin{enumerate}
  \item\label{item:E0-is-trivial} The relation \( \oer{0} \) on \( \mathcal{C}_{0} \) is the trivial
    equivalence relation: \( x\, \oer{0}\, y \) if and only if \( x = y \).
  \item\label{item:Cn-sparse} \( \mathcal{C}_{n} \) is a sparse cross section for every \( n \) and
    \( \rgap[\mathcal{C}_{n}](x) \ge 1 \) for all \( x \in \mathcal{C}_{n} \).
  \item\label{item:Cn+1-is-shifted-copy-of-Cn} \( \mathcal{C}_{n+1} = \mathcal{C}_{n} + h_{n+1} \),
    i.e.,
    \[ \mathcal{C}_{n+1} = \bigl\{ x + h_{n+1}(x) \bigm| x \in \mathcal{C}_{n} \bigr\}. \]
  \item\label{item:hn-are-constant-on-En-classes} \( h_{n+1} \) is constant on \( \oer{n}
    \)-classes: \( x\, \oer{n}\, y \implies h_{n+1}(x) = h_{n+1}(y) \).
  \item\label{item:En-classes-are-TS-regular} \( \oer{n} \)-classes are
    \( \mathcal{T}(S) \)-regular: \( \oer{n} \subseteq \oer[\tileable(S)]{\mathcal{C}_{n}} \).
  \item\label{item:En+1-is-coarser-En} \( \oer{n+1} \) is coarser than \( \oer{n} \):
    \[ x\, \oer{n}\, y \implies \bigl(x + h_{n+1}(x)\bigr)\, \oer{n+1} \bigl(y +
    h_{n+1}(y)\bigr). \]
  \item\label{item:En-classes-are-far-from-each-other} Distinct \( \oer{n} \)-classes are far from
    each other: if \( x, y \in \mathcal{C}_{n} \) belong to the same orbit and are not
    \( \oer{n} \)-equivalent, then \( \dist(x,y) > K_{n}-1 \).
  \item\label{item:En-classes-are-Kn-chained} If \( x\, \oer{n}\, \phi_{\mathcal{C}_{n}}(x) \), then
    \( \dist\bigl(x, \phi_{\mathcal{C}_{n}}(x)\bigr) \le K_{n}+1 \).
  \end{enumerate}

  Let us first finish the proof under the assumption that such cross sections have been manufactured.
  Set \( f_{n,n+1} : \mathcal{C}_{n} \to \mathcal{C}_{n+1} \) to be the map
  \( f_{n,n+1}(x) = x + h_{n+1}(x) \) and define \( f_{m,n} : \mathcal{C}_{m} \to \mathcal{C}_{n} \)
  for \( m \le n \) to be
  \[ f_{m,n} = f_{n-1,n} \circ f_{n-2, n-1} \circ \cdots \circ f_{m,m+1} \]
  with the agreement that \( f_{m,m} : \mathcal{C}_{m} \to \mathcal{C}_{m} \) is the identity map.
  Since \( |h_{n}(x)| < \epsilon_{n-1} \), and since \( \rgap[\mathcal{C}_{n}](x) \ge 1 \) by
  \( \eqref{item:Cn-sparse} \), it follows that maps \( f_{n,n+1} \) are injective, and thus so are all
  the maps \( f_{m,n} \), \( m \le n \).  Since they are also surjective by
  \eqref{item:Cn+1-is-shifted-copy-of-Cn}, the maps \( f_{m,n} \) are Borel isomorphisms between
  \( \mathcal{C}_{m} \) and \( \mathcal{C}_{n} \).  In simple words, \( \mathcal{C}_{n} \) is
  obtained from \( \mathcal{C}_{m} \) by moving each point of \( \mathcal{C}_{m} \) by at most
  \( \sum_{i=m}^{n-1} \epsilon_{i} \) as prescribed by functions \( h_{i} \), \( m < i \le n\).  Let
  \[ H_{m} : \mathcal{C}_{m} \to
  \Bigl(-\mkern-9mu\sum_{i=m}\mkern-4mu\epsilon_{i},\sum_{i=m}\mkern-4mu\epsilon_{i}\Bigr) \qquad
  H_{m}(x) = \sum_{n=m}^{\infty} h_{n+1}\bigl( f_{m,n}(x) \bigr). \]
  be the ``total shift'' function.  
  Note that \( H_{m}(x) = H_{n}\bigl(f_{m,n}(x)\bigr) \) for any \( x \in \mathcal{C}_{m} \) and
  \( m \le n \).  The limit cross section \( \mathcal{C}_{\infty} \) is defined by
  \( \mathcal{C}_{\infty} = \mathcal{C}_{0} + H_{0} \), i.e.,
  \[ \mathcal{C}_{\infty} = \{ x + H_{0}(x) \mid x \in \mathcal{C}_{0} \}. \]
  Note also that \( \mathcal{C}_{\infty} = \{ x + H_{m}(x) \mid x \in \mathcal{C}_{m} \} \) for any
  \( m \in \mathbb{N} \), and the map \( x \mapsto x + H_{m}(x) \) is a bijection between
  \( \mathcal{C}_{m} \) and \( \mathcal{C}_{\infty} \).

  We claim that \( \mathcal{C}_{\infty} \) is a \( \mathcal{T}(S) \)-regular cross section.  It is
  clear that \( \mathcal{C}_{\infty} \) is a cross section.  Let
  \( y_{1}, y_{2} \in \mathcal{C}_{\infty} \), \( y_{1} \ne y_{2} \), be given and let \( m \) be so
  large that \( K_{m} > \dist(y_{1}, y_{2}) + 2 \).
 
  Pick \( z_{1}, z_{2} \in \mathcal{C}_{m} \) such that \( y_{i} = z_{i} + H_{m}(z_{i}) \).  Since
  \( H_{m}(z_{i}) \le 1/3 \), 
  \[ \dist(z_{1},z_{2}) \le \dist(y_{1},y_{2}) + 2/3 < K_{m} - 1, \]
  hence \( z_{1}\, \oer{m}\, z_{2} \) by \eqref{item:En-classes-are-far-from-each-other}, whence
  \eqref{item:En-classes-are-TS-regular} implies that \( \dist(z_{1}, z_{2}) \in \mathcal{T}(S) \),
  but by \eqref{item:hn-are-constant-on-En-classes}
  and \eqref{item:En+1-is-coarser-En}
  we get
  \( H_{m}(z_{1}) = H_{m}(z_{2}) \).  Therefore,
  \( \dist(y_{1},y_{2}) = \dist(z_{1},z_{2}) \in \mathcal{T}(S) \).  
  Thus, \( \mathcal{C}_{\infty} \) is a \( \tileable(S) \)-regular cross section.

  We now add some points to \( \mathcal{C}_{\infty} \) to make it \( S \)-regular.  Let
  \( S^{< \omega}_{\uparrow} \) be the set of all tuples \( (0, t_{1}, \ldots, t_{m}) \),
  \( t_{k} \in \mathbb{R} \), such that \( 0 < t_{1} < \cdots < t_{m} \), and
  \( t_{k+1} - t_{k} \in S \), for all \( k < m \), \( m \in \mathbb{N} \).  Fix a map
  \( \zeta : \mathcal{T}(S) \to S^{< \omega}_{\uparrow} \) such that for any
  \( t \in \mathcal{T}(S) \) one has \( t = t_{m} \) , where \( \zeta(t) = (t_{k})_{k=1}^{m} \).  In
  other words, \( \zeta(t) \) is a way to decompose an interval of length \( t \) into intervals of
  lengths in \( S \).  Let \( \mathcal{C} \) be given by
  \[ \mathcal{C} = \bigl\{ x + t \bigm| x \in \mathcal{C}_{\infty},\ t \textrm{ is one of the
    coordinates in } \zeta\bigl( \rgap[\mathcal{C}_{\infty}](x) \bigr) \bigr\}.  \]
  Since \( S \) is bounded away from zero, \( \mathcal{C} \) is a lacunary \( S \)-regular cross
  section.

  It remains to show how such \( \mathcal{C}_{n} \), \( \oer{n} \), and \( h_{n} \) can be
  constructed.  Let \( \mathcal{C}_{0} \) be a sparse cross section; by passing to a sub cross
  section we may assume that \( \rgap[\mathcal{C}_{0}](x) > K_{0} \) for all
  \( x \in \mathcal{C}_{0} \).  We take \( \oer{0} \) to be the trivial equivalence relation.

  Suppose we have constructed \( \mathcal{C}_{n} \), \( \oer{n} \), and
  \( h_{n} : \mathcal{C}_{n-1} \to (-\epsilon_{n-1}, \epsilon_{n-1}) \).  Consider the relation
  \( \oer[\le K_{n+1}]{\mathcal{C}_{n}} \) on \( \mathcal{C}_{n} \).
  By item \eqref{item:En-classes-are-Kn-chained} \( \oer[\le K_{n+1}]{\mathcal{C}_{n}} \) is coarser than
  \( \oer{n} \) (recall that \( K_{n+1} \ge K_{n} + 1 \)).  Since \( \mathcal{C}_{n} \) is sparse by \eqref{item:Cn-sparse}, each
  \( \oer[\le K_{n+1}]{\mathcal{C}_{n}} \)-class is finite and constitutes an interval in
  \( \mathcal{C}_{n} \).  Any \( \oer[\le K_{n+1}]{\mathcal{C}_{n}} \)-class consists of finitely
  many \( \oer{n} \)-classes.
  \begin{figure}[htb]
    \centering
    \begin{tikzpicture}
      \foreach \x in {0.0,0.7,1.3,1.9, 4.1,4.9,5.8, 7.8,8.6,9.3,10.2} {
        \filldraw[black] (\x, 0) circle (1pt);
      }
      \draw (0.78,0.25) node {\( x_{1} \)}
      (4.98,0.25) node {\( x_{2} \)}
      (9.38,0.25) node {\( x_{3} \)};
      \draw [
      decoration={
        brace,
        mirror,
        raise=0.3cm
      },
      decorate
      ] (-0.02,0) -- (1.92,0) node [pos=0.585,anchor=north,yshift=-0.4cm]
      {\( [x_{1}]_{\oer{n}} \)};
      \draw [
      decoration={
        brace,
        mirror,
        raise=0.3cm
      },
      decorate
      ] (4.08,0) -- (5.82,0) node [pos=0.6,anchor=north,yshift=-0.4cm]
      {\( [x_{2}]_{\oer{n}} \)};
      \draw [
      decoration={
        brace,
        mirror,
        raise=0.3cm
      },
      decorate
      ] (7.78,0) -- (10.22,0) node [pos=0.57,anchor=north,yshift=-0.4cm]
      {\( [x_{3}]_{\oer{n}} \)};
      \draw (3,0) node {\( d_{2} \)};
      \draw (6.8,0) node {\( d_{3} \)};
      \draw (5.1, -1.55) node[text width=8cm] {Each \( [x_{i}]_{\oer{n}} \)-class is shifted by at most
      \( \epsilon_{n} \) so as to make gaps between classes belong to \( \tileable(S) \)};
      \foreach \x in {0.0,0.7,1.3,1.9, 4.3,5.1,6.0, 7.6,8.4,9.1,10.0} {
        \filldraw[black] (\x, -2.3) circle (1pt);
      }
      \draw [
      decoration={
        brace,
        mirror,
        raise=0.2cm
      },
      decorate
      ] (2.0,-2.3) -- (4.20,-2.3) node [pos=0.5,anchor=north,yshift=-0.3cm]
      {\( \in \tileable(S) \)};
      \draw [
      decoration={
        brace,
        mirror,
        raise=0.2cm
      },
      decorate
      ] (6.1,-2.3) -- (7.5,-2.3) node [pos=0.5,anchor=north,yshift=-0.3cm]
      {\( \in \tileable(S) \)};
    \end{tikzpicture}
    \caption{Constructing \( \mathcal{C}_{n+1} \) from \( \mathcal{C}_{n} \).}
    \label{fig:constructing-Cn+1-from-Cn}
  \end{figure}
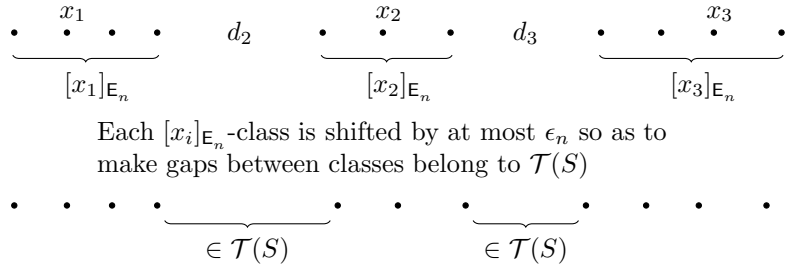
  Consider one such class and let \( x_{1}, \ldots, x_{m} \in \mathcal{C}_{n} \) be representatives
  of \( \oer{n} \)-classes in the \( \oer[\le K_{n+1}]{\mathcal{C}_{n}} \)-class:
  \begin{itemize}
  \item \( x_{1} < x_{2} < \cdots < x_{m} \);
  \item \( x_{i}\, \oer[\le K_{n+1}]{\mathcal{C}_{n}}\, x_{j} \);
  \item \( [x_{1}]_{\oer[\le K_{n+1}]{\mathcal{C}_{n}}} = \bigsqcup_{k=1}^{m} [x_{k}]_{\oer{n}} \).
  \end{itemize}
  Let \( d_{k} \), \( 2 \le k \le m \), be the gap between the \( k^{\mathrm{th}} \) and
  \( k-1^{\mathrm{st}} \) \( \oer{n} \)-classes:
  \[ d_{k} = \dist\bigl(\max[x_{k-1}]_{\oer{n}}, \min[x_{k}]_{\oer{n}} \bigr). \]
  By \eqref{item:En-classes-are-far-from-each-other}, \( d_{k} \ge K_{n} - 1 \), and
  therefore \( | \xi(d_{2})| < \epsilon_{n} \).  We let \( h_{n+1}(x) = 0 \) for
  \( x \in [x_{1}]_{\oer{n}} \) and \( h_{n+1}(x) = \xi(d_{2}) \) for
  \( x \in [x_{2}]_{\oer{n}} \).  By induction on \( k \) we set
  \[ h_{n+1}(x) = \xi\bigl( d_{k} - h_{n+1}(x_{k-1}) \bigr) \textrm{ for } x \in
    [x_{k}]_{\oer{n}}. \]
  In words, we shift \( \oer{n} \)-classes one by one by at most \( \epsilon_{n} \) to make
  distances between them elements of \( \tileable(S) \).  This can be done within each
  \( \oer[\le K_{n+1}]{\mathcal{C}_{n}} \)-class in a Borel way, thus defining a Borel map
  \( h_{n+1} : \mathcal{C}_{n} \to (-\epsilon_{n}, \epsilon_{n}) \).  Finally, we let
  \( \mathcal{C}_{n+1} = \mathcal{C}_{n} + h_{n+1} \), and
  \( \oer{n+1} = \oer[\le K_{n+1}]{\mathcal{C}_{n}} + h_{n+1} \), i.e.,
  \[ \bigl(x +h_{n+1}(x) \bigr) \oer{n+1}\bigl(y + h_{n+1}(y)\bigr) \quad \textrm{ if and only if}
  \quad x\, \oer[\le K_{n+1}]{\mathcal{C}_{n}}\, y. \]
  All the items (\ref{item:E0-is-trivial}-\ref{item:En-classes-are-Kn-chained}) are now easily
  verified.
\end{proof}

\section{Large regular blocks}
\label{sec:prop-freed-princ}

In this section we fix a positive real \( \upsilon \in \mathbb{R}^{>0} \) and a strictly monotone
sequence \( (t_{m})_{m=0}^{\infty} \) that converges to \( 0 \) and is such that \( \upsilon +
t_{0} > 0 \).  We set
\[ \tileable_{m} = \tileable(\{\upsilon + t_{0}, \ldots, \upsilon + t_{m}\}) \]
to denote the semigroup generate by \( \{ \upsilon+t_{i}: i \le m \} \), and also
\( \tileable_{m}^{*} = \upsilon + \tileable_{m} \).  In this section we do the necessary preparation
to show that every flow admits a cross section with arbitrarily large \( \bigcup_{m} \tileable_{m}
\)-regular blocks.

Let \( d_{1}, \ldots, d_{n} \) be a family of positive reals and let \( R_{i} \subseteq
\mathcal{U}_{\epsilon}(d_{i}) \) be non-empty subsets of the \( \epsilon \)-neighborhoods of \(
d_{i} \).  We let \( \mathcal{A}_{n} = \mathcal{A}\bigl(\epsilon, (d_{i})_{i=1}^{n},
(R_{i})_{i=1}^{n}\bigr) \) to denote the set of all \( z \in \mathcal{U}_{\epsilon}\bigl(
\sum_{i=1}^{n} d_{i} \bigr) \) for which there exist \( x_{i} \in R_{i} \) such that \( z =
\sum_{i=1}^{n} x_{i} \) and
\[ \bigl| \sum_{i=1}^{r} (d_{i}-x_{i}) \bigr| < \epsilon \textrm{ for all } r \le n. \]
When sequences \( (d_{i}) \) and \( (R_{i}) \) are constant, \( d := d_{i} \) and \( R := R_{i} \),
we use the notation \( \mathcal{A}_{n}(\epsilon, d, R) \).  For the geometric explanation of sets \(
\mathcal{A}_{n} \) we refer the reader to Subsection 6.2 of \cite{slutsky2015}.

For two non-zero reals \( a, b \in \mathbb{R} \) we let \( \gcd(a,b) \) to denote the largest
positive real \( c \) such that both \( a \) and \( b \) are integer multiples of \( c \).  If no
such real exists, i.e., if \( a \) and \( b \) are rationally independent, we set \( \gcd(a,b) = 0
\).

We need two lemmas from \cite{slutsky2015}, which we state below.

\begin{lemma}[see Lemma 6.7 in \cite{slutsky2015}] Sets \( \mathcal{A}_{n}(\epsilon,d,R) \) have the
  following additivity properties.
  \label{lem:additivity-of-An}
  \begin{enumerate}[(i)]
  \item\label{item:additivity-one-dim} If \( y_{i} \in R \), \( 1 \le i \le n \), are such that
    \[ \Bigl| nd - \sum_{i=1}^{n} y_{i} \Bigr| < \epsilon, \]
    then \( \sum_{i=1}^{n} y_{i} \in \mathcal{A}_{n}(\epsilon,d,R) \).
  \item\label{item:additivity} If \( x_{i} \in \mathcal{A}_{n_{i}}(\epsilon,d,R) \), \( 1 \le i \le k
    \), are such that
    \[ \Bigl| \sum_{i=1}^{k}(x_{i} - n_{i}d) \Bigr| < \epsilon, \]
    then \( \sum_{i=1}^{k} x_{i} \in \mathcal{A}_{\sum_{i=1}^{k}n_{i}}(\epsilon,d,R)\).
  \item\label{item:inclusion} If \( d \in R \) and \( m \le n \), then
    \( \mathcal{A}_{m}(\epsilon,d,R) + (n-m)d \subseteq \mathcal{A}_{n}(\epsilon,d,R) \).
  \end{enumerate}
\end{lemma}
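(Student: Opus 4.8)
The plan is to dispatch the three items in the order \eqref{item:inclusion}, \eqref{item:additivity-one-dim}, \eqref{item:additivity}: the first is immediate from the definition, the last reduces to \eqref{item:additivity-one-dim} in one line, and the only genuine content is a finite rearrangement argument hidden in \eqref{item:additivity-one-dim}. For \eqref{item:inclusion}, take \( z=\sum_{i=1}^{m}x_{i}\in\mathcal{A}_{m}(\epsilon,d,R) \) with a witnessing tuple \( x_{1},\dots,x_{m}\in R \) and simply append \( n-m \) copies of \( d \) (legitimate since \( d\in R \)). The partial-sum deviations \( \sum_{i=1}^{r}(d-x_{i}) \) are unchanged for \( r\le m \) and freeze at \( \sum_{i=1}^{m}(d-x_{i}) \) for \( m<r\le n \), so they all remain in \( (-\epsilon,\epsilon) \); moreover \( z+(n-m)d\in\mathcal{U}_{\epsilon}(nd) \) because \( z\in\mathcal{U}_{\epsilon}(md) \). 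Hence \( z+(n-m)d\in\mathcal{A}_{n}(\epsilon,d,R) \).

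For \eqref{item:additivity-one-dim}, set \( a_{i}=d-y_{i} \); since \( y_{i}\in R\subseteq\mathcal{U}_{\epsilon}(d) \) one has \( |a_{i}|<\epsilon \), and the hypothesis is exactly \( \bigl|\sum_{i=1}^{n}a_{i}\bigr|<\epsilon \). It suffices to permute the \( y_{i} \) so that every partial sum of the \( a_{i} \) stays in \( (-\epsilon,\epsilon) \): then \( \sum y_{i} \), which lies in \( \mathcal{U}_{\epsilon}(nd) \) by hypothesis, is witnessed by the permuted tuple. I would build the permutation greedily, maintaining the running partial sum \( s \): as long as some index is unused, if \( s\ge 0 \) use an unused \( a_{i}\le 0 \) whenever one exists, and if \( s<0 \) use an unused \( a_{i}\ge 0 \) whenever one exists. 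Such a step is safe: when \( s\ge 0 \), adding an \( a_{i}\le 0 \) keeps the new value \( s'\le s<\epsilon \), and since \( a_{i}>-\epsilon \) also \( s'>-\epsilon \); the case \( s<0 \) is symmetric. The first time the requested sign is unavailable, all remaining \( a_{i} \) share one sign, the subsequent partial sums are monotone and run between the current \( s \) and the fixed endpoint \( \sum_{i}a_{i} \); monotonicity together with \( |s|<\epsilon \) and \( \bigl|\sum_{i}a_{i}\bigr|<\epsilon \) traps every one of them in \( (-\epsilon,\epsilon) \), regardless of the order chosen for the tail, so the construction succeeds.

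For \eqref{item:additivity}, fix for each \( i\le k \) a witnessing decomposition \( x_{i}=\sum_{j=1}^{n_{i}}x_{i,j} \) with \( x_{i,j}\in R \), and regard all \( N:=\sum_{i}n_{i} \) of the \( x_{i,j} \) as a single family in \( R \). Its total deviation is \( Nd-\sum_{i}x_{i}=\sum_{i}(n_{i}d-x_{i}) \), whose modulus is \( <\epsilon \) by hypothesis, so \eqref{item:additivity-one-dim} applied with \( n=N \) yields \( \sum_{i}x_{i}\in\mathcal{A}_{N}(\epsilon,d,R)=\mathcal{A}_{\sum_{i}n_{i}}(\epsilon,d,R) \), as required.

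The only step that needs care is the greedy rearrangement in \eqref{item:additivity-one-dim}, and within it the ``monotone tail'' case — this is the one place where both hypotheses \( |a_{i}|<\epsilon \) and \( \bigl|\sum_{i}a_{i}\bigr|<\epsilon \) are genuinely used. Since the whole lemma is a finite, purely arithmetic statement about real numbers, there are no measurability or definability issues here; the Borel bookkeeping enters only later, where the sets \( \mathcal{A}_{n} \) get applied.
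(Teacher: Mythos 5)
Your proof is correct. Note that the paper itself states this lemma without proof, quoting Lemma 6.7 of \cite{slutsky2015}; your argument — the greedy sign-balancing rearrangement for item \eqref{item:additivity-one-dim}, with \eqref{item:additivity} reduced to it by concatenating witnessing decompositions and \eqref{item:inclusion} handled by appending copies of \( d \) — is exactly the natural argument behind that cited lemma, and all the estimates (in particular the monotone-tail case, where both \( |a_{i}|<\epsilon \) and \( \bigl|\sum_{i}a_{i}\bigr|<\epsilon \) are used) check out.
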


\begin{lemma}[see Lemma 6.8 in \cite{slutsky2015}]
  \label{lem:generating-elements-of-An}
  Let \( \epsilon > 0 \), let \( 0 < \delta \le \epsilon \), and let \( x, y \in
  \mathcal{A}_{m}(\epsilon, d, R) \),
  \( m \ge 1 \), be given.  Set \( a = x - md \) and \( b = y - md \).  Suppose that \( d \in R \),
  and \( a < 0 < b \).
  There exists \( N = N_{\Lem \ref{lem:generating-elements-of-An}}(R, m, \epsilon, \delta, d, x, y) \)
  such that for all \( n \ge N \)
  \begin{itemize}
  \item if \( \delta > \gcd(a,b) \), then the set \( \mathcal{A}_{n}(\epsilon,d,R) \) is
    \( \delta \)-dense in \( \mathcal{U}_{\epsilon}(nd\+) \);
  \item if \( \delta \le \gcd(a,b) \), then the set \( \mathcal{A}_{n}(\epsilon,d,R) \) is
    \( \kappa \)-dense in \( \mathcal{U}_{\epsilon}(nd\+) \) for any \( \kappa > \gcd(a,b) \) and
    moreover
    \[ nd + k\gcd(a,b) \in \mathcal{A}_{n}(\epsilon,d,R)\ \textrm{ for all integers } k \textrm{
      such that } nd + k\gcd(a,b) \in \mathcal{U}_{\epsilon}(nd\+). \]
  \end{itemize}
\end{lemma}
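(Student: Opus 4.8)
The plan is to pass to the \emph{drift sets}
\(D_n := \{\, z - nd : z \in \mathcal{A}_n(\epsilon,d,R)\,\} \subseteq (-\epsilon,\epsilon)\)
and prove the corresponding statements about them: since the translation \(z \mapsto z-nd\) is an isometry of \(\mathcal{U}_{\epsilon}(nd\+)\) onto \((-\epsilon,\epsilon)\), the set \(\mathcal{A}_n(\epsilon,d,R)\) is \(\rho\)-dense in \(\mathcal{U}_{\epsilon}(nd\+)\) precisely when \(D_n\) is \(\rho\)-dense in \((-\epsilon,\epsilon)\), and \(nd+t \in \mathcal{A}_n(\epsilon,d,R)\) precisely when \(t \in D_n\). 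The first observation is that the sets \(D_n\) are nested: since \(d \in R\), item~\eqref{item:inclusion} of Lemma~\ref{lem:additivity-of-An} gives \(\mathcal{A}_m(\epsilon,d,R)+(n-m)d \subseteq \mathcal{A}_n(\epsilon,d,R)\), so a point of \(\mathcal{A}_m(\epsilon,d,R)\) at drift \(t\) produces a point of \(\mathcal{A}_n(\epsilon,d,R)\) at the same drift \(t\); hence \(D_m \subseteq D_n\) whenever \(m \le n\). Consequently it suffices to exhibit a single \(N\) for which \(D_N\) already has the required density, resp.\ contains the required arithmetic progression, as then the same holds for all \(n \ge N\).

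Next, write \(a = x-md\) and \(b = y-md\), so \(a<0<b\) and \(|a|,|b|<\epsilon\). Whenever \(p,q\) are non-negative integers with \(p+q\ge 1\) and \(|pa+qb|<\epsilon\), applying item~\eqref{item:additivity} of Lemma~\ref{lem:additivity-of-An} to \(p\) copies of \(x\) and \(q\) copies of \(y\) — all lying in \(\mathcal{A}_m(\epsilon,d,R)\), with combined drift \(pa+qb\) — shows \(px+qy \in \mathcal{A}_{(p+q)m}(\epsilon,d,R)\), i.e.\ \(pa+qb \in D_{(p+q)m}\). Also \(0 \in D_n\) for every \(n\ge 1\), since \(nd \in \mathcal{A}_n(\epsilon,d,R)\) via the decomposition of \(nd\) into \(n\) copies of \(d\). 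Thus, if \(\mathcal{S}\) is any \emph{finite} subset of \(\{pa+qb : p,q\ge 0\}\cap(-\epsilon,\epsilon)\) representable using coefficients bounded by some \(P\), then \(\mathcal{S} \subseteq D_N\) for \(N := 2Pm\), and hence for all larger indices by nestedness; all such bounds depend only on \(R,m,\epsilon,\delta,d,x,y\). So the argument reduces to producing, in each regime, a finite set of semigroup elements \(pa+qb\) witnessing the claimed density — the only genuinely number-theoretic part.

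In the regime \(\delta > \gcd(a,b)\) there are two cases. If \(\gcd(a,b)=0\), i.e.\ \(b/|a|\) is irrational, the semigroup \(\{pa+qb : p,q\ge 0\}\) contains arbitrarily small positive reals and arbitrarily small negative reals (choose \(q\) so that \(qb \bmod |a|\) is small, resp.\ close to \(|a|\), and subtract the appropriate number of copies of \(|a|\)); a subsemigroup of \(\mathbb{R}\) with this property is dense in \(\mathbb{R}\), so we may pick finitely many of its elements forming a \(\delta\)-dense net of \((-\epsilon,\epsilon)\), finishing this case. If \(\gcd(a,b)=g>0\), write \(a = -\alpha g\) and \(b = \beta g\) with coprime positive integers \(\alpha,\beta\); for each integer \(k\) with \(|kg|<\epsilon\) the equation \(q\beta - p\alpha = k\) has a solution in non-negative integers \(p,q\) whose size is bounded in terms of \(\alpha,\beta,\epsilon/g\), so \(kg = pa+qb \in D_N\). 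Hence \(D_N\) contains every integer multiple of \(g\) lying in \((-\epsilon,\epsilon)\), and since \(\delta>g\) this set is \(\delta\)-dense in \((-\epsilon,\epsilon)\).

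Finally, in the regime \(\delta \le \gcd(a,b)\) (so \(g:=\gcd(a,b)>0\)) the same computation gives \(D_N \supseteq \{\, kg : k\in\mathbb{Z},\ |kg|<\epsilon \,\}\), which is exactly the asserted ``moreover'' clause \(nd + k\gcd(a,b) \in \mathcal{A}_n(\epsilon,d,R)\); and an arithmetic progression of step \(g\) is \(\kappa\)-dense in \((-\epsilon,\epsilon)\) for every \(\kappa>g\), since any open subinterval of length exceeding \(g\) contains a multiple of \(g\). In all cases the nestedness upgrades the statement from \(n=N\) to all \(n\ge N\). I expect the main obstacle to be precisely this number-theoretic core — density of \(\{pa+qb : p,q\ge 0\}\) in the irrational case, and bounded non-negative solvability of \(q\beta - p\alpha = k\) in the rational case — together with the bookkeeping that turns these into one index \(N\) uniform in \(n\), for which the nestedness coming from item~\eqref{item:inclusion} of Lemma~\ref{lem:additivity-of-An} is the key tool.
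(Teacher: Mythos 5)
The paper states this lemma without proof, importing it as Lemma 6.8 of \cite{slutsky2015}, so there is no in-paper argument to compare with line by line; assessed on its own, your proof is correct. Your reduction---translating to the drift sets \( D_n \), getting \( D_m \subseteq D_n \) from item \eqref{item:inclusion} of Lemma \ref{lem:additivity-of-An} (this is exactly where \( d \in R \) is used), and applying item \eqref{item:additivity} to \( p \) copies of \( x \) and \( q \) copies of \( y \) to place every drift \( pa+qb \in (-\epsilon,\epsilon) \) in \( D_{(p+q)m} \)---cleanly reduces the statement to the two standard facts you isolate (density of the semigroup generated by \( a<0<b \) when \( \gcd(a,b)=0 \), and bounded non-negative solvability of \( q\beta - p\alpha = k \) when \( a=-\alpha g \), \( b=\beta g \) with \( \alpha,\beta \) coprime), after which the uniformity in \( n \ge N \) follows from the nesting exactly as you say, with \( N \) depending only on the permitted data. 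This is the argument one would expect behind the cited lemma, and I see no gaps worth flagging.
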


Let us now explain the meaning of sets \( \tileable_{m} \) and \( \tileable_{m}^{*} \) defined
above.  We work with sets \( R_{i} \) that are subsets of
\[ \tileable_{\infty} = \tileable(\{\upsilon + t_{i} : i\in \mathbb{N}\}). \]
The problem is that there are too many possibilities for the sets \( R_{i} \), while the argument
for Lemma \ref{lem:delta-density-general} below relies upon having only finitely many possibilities
for \( R_{i} \).  So, we stratify \( \tileable_{\infty} \) into sets \( \tileable_{m} \) and note
that for any \( D > 0 \) the set \( \tileable_{m} \cap [0, D] \) is finite.  While sets \( R_{i} \)
will be infinite, each of them will be determined by a finite subset of \( \tileable_{m} \) and a
natural parameter \( r \in \mathbb{N} \).  This will let us reduce the amount of possibilities for
\( R_{i} \) to a finite number.  The exact definition is as follows.  We say that
\( R \subseteq \mathcal{U}_{\epsilon}(d\+) \) is \emph{\( r \)-tamely \( \delta \)-dense in
  \( \mathcal{U}_{\epsilon}(d\+) \)} if there exists a finite set
\( L \subseteq \mathcal{U}_{\epsilon}(d\+) \) satisfying
\begin{itemize}
\item \( R = \bigl(L + (t_{m})_{m=r}^{\infty}\bigr) \cap \mathcal{U}_{\epsilon}(d\+) \);
\item \( L \subseteq \tileable_{r}^{*} \);
\item \( L \) is \( \delta \)-dense in \( \mathcal{U}_{\epsilon}(d\+) \).
\end{itemize}
We say that \( R \) is \emph{tamely \( \delta \)-dense in \( \mathcal{U}_{\epsilon}(d\+) \)} if it is
\( r \)-tamely \( \delta \)-dense in \( \mathcal{U}_{\epsilon}(d\+) \) for some
\( r \in \mathbb{N} \).  Note that if a finite \( L \subseteq \tileable_{r}^{*} \) is
\( \delta \)-dense in \( \mathcal{U}_{\epsilon}(d\+) \), then there exists \( m_{0} \in \mathbb{N} \)
so large that for any \( m \ge m_{0} \) the set \( L + t_{m} \) is also a subset of
\( \mathcal{U}_{\epsilon}(d\+) \) and is \( \delta \)-dense in \( \mathcal{U}_{\epsilon}(d\+) \).

\begin{lemma}[cf.~Lemma 6.10 in \cite{slutsky2015}]
  \label{lem:delta-density-constant}
  For any \( \epsilon > 0 \), any \( 0 < \delta \le \epsilon \), any \( d \), any 
  \( R \subseteq \mathcal{U}_{\epsilon}(d\+) \) such that \( d \in R \) and \( R \) is tamely
  \( \epsilon \)-dense in \( \mathcal{U}_{\epsilon}(d\+) \) there exist
  \( N = N_{\Lem \ref{lem:delta-density-constant}}(\epsilon, \delta, d, R) \) and
  \( M = M_{\Lem \ref{lem:delta-density-constant}}(\epsilon, \delta, d, R) \) such that for any
  \( n \ge N \) the set \( \mathcal{A}_{n}(\epsilon, d, R) \) contains a subset that
  is \( M \)-tamely \( \delta \)-dense in \( \mathcal{U}_{\epsilon}(nd\+) \).
\end{lemma}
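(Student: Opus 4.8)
The plan is to prove the statement for one suitably large scale $n_1$ and then propagate it to all $n\ge n_1$ by translation. First I unpack the hypothesis: fix a finite $L\subseteq\tileable_r^*$ which is $\epsilon$-dense in $\mathcal{U}_\epsilon(d)$ and satisfies $R=\bigl(L+(t_m)_{m=r}^\infty\bigr)\cap\mathcal{U}_\epsilon(d)$. Since $d\in R$ we may write $d=l_d+t_{m_d}$ with $l_d\in L$ and $m_d\ge r$, and because $l_d-\upsilon\in\tileable_r$ this rewrites as $d=(l_d-\upsilon)+(\upsilon+t_{m_d})\in\tileable_{m_d}$; hence $kd\in\tileable_{m_d}$ for every $k\ge1$. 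Granting this, the propagation is immediate from the inclusion property in Lemma~\ref{lem:additivity-of-An}: if a finite $L'\subseteq\tileable_{M_1}^*$ is $\delta$-dense in $\mathcal{U}_\epsilon(n_1 d)$ and $\bigl(L'+(t_m)_{m=M_1}^\infty\bigr)\cap\mathcal{U}_\epsilon(n_1 d)\subseteq\mathcal{A}_{n_1}(\epsilon,d,R)$, then for $n>n_1$ the translate $L'+(n-n_1)d$ is finite, lies in $\tileable_{\max(M_1,m_d)}^*$, is $\delta$-dense in $\mathcal{U}_\epsilon(nd)$, and $(l'+t_m)+(n-n_1)d\in\mathcal{A}_{n_1}(\epsilon,d,R)+(n-n_1)d\subseteq\mathcal{A}_n(\epsilon,d,R)$ for all $m\ge M_1$. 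So it suffices to treat $n=n_1$, taking then $N=n_1$ and $M=\max(M_1,m_d)$.

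Before the single-scale step I upgrade the coarse $\epsilon$-density of $R$ to genuine fineness of the sets $\mathcal{A}_n$. By $\epsilon$-density pick $l^-\in L\cap(d-\epsilon,d)$ and $l^+\in L\cap(d,d+\epsilon)$. For all sufficiently large $j$ the points $l^\pm+t_j$ lie in $\mathcal{U}_\epsilon(d)$, hence in $R$, while $(l^-+t_j)-d\to l^--d<0$ and $(l^++t_j)-d\to l^+-d>0$. Fix one such $y=l^++t_{j_2}$ and set $b=y-d>0$. Only finitely many reals $c\ge\delta/4$ divide $b$; moreover, along any far enough tail the numbers $a(j):=(l^--d)+t_j$ all lie in one interval of length $<\delta/4$ (as $(t_j)$ is monotone and converges to $0$), so $a(j)$ misses each of these finitely many lattices $c\mathbb{Z}$ for all but finitely many $j$. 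Hence I can choose $x=l^-+t_{j_1}$ with $a:=x-d<0$ and $\gcd(a,b)<\delta/4$. Applying Lemma~\ref{lem:generating-elements-of-An} to $x,y\in R=\mathcal{A}_1(\epsilon,d,R)$ (with $\delta$ there taken equal to $\delta/4$, which is legitimate since $\gcd(a,b)<\delta/4$) yields an $N_0$ such that $\mathcal{A}_n(\epsilon,d,R)$ is $\delta/4$-dense in $\mathcal{U}_\epsilon(nd)$ for every $n\ge N_0$. I set $n_1=\max(N_0+1,2)$.

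The single-scale construction is the heart of the proof, and its only real difficulty is to give $L'$ the tame form $L'\subseteq\tileable_{M_1}^*$; the device is to peel one summand off. For $l^*\in L\cap\mathcal{U}_\epsilon(d)$ put $\theta=d-l^*$ and choose $m_0\ge r$ large enough that $|t_{m_0}|<\delta/100$ and $l^*+t_m\in\mathcal{U}_\epsilon(d)$ (hence $l^*+t_m\in R$) for all $m\ge m_0$. If $z'\in\mathcal{A}_{n_1-1}(\epsilon,d,R)$ satisfies $\bigl|((n_1-1)d-z')+\theta\bigr|<\epsilon-|t_{m_0}|$, then for every $m\ge m_0$ one has $\bigl|((n_1-1)d-z')+\bigl(d-(l^*+t_m)\bigr)\bigr|<\epsilon$ (using $|t_m|\le|t_{m_0}|$), so the additivity property of Lemma~\ref{lem:additivity-of-An}, applied to the two summands $z'$ and $l^*+t_m$, places $z'+(l^*+t_m)$ in $\mathcal{A}_{n_1}(\epsilon,d,R)$; and writing $z'=\sum_j(l_j+t_{\mu_j})$ with $l_j\in L$ and $\mu_j\ge r$ gives $z'\in\tileable_{r'}$ for $r'=\max(r,\mu_1,\dots,\mu_{n_1-1})$, whence $z'+l^*=\upsilon+\bigl(z'+(l^*-\upsilon)\bigr)\in\tileable_{r'}^*$. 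Since $\mathcal{A}_{n_1-1}$ is $\delta/4$-dense in $\mathcal{U}_\epsilon((n_1-1)d)$, a finite $\delta/2$-dense selection of admissible $z'$ turns $\{z'+l^*\}$ into a finite $\delta/2$-dense subset of $\mathcal{U}_\epsilon(n_1 d-\theta)\cap\mathcal{U}_{\epsilon-|t_{m_0}|}(n_1 d)$. Carrying this out once with $l^*=l^-$ (so $\theta>0$) and once with $l^*=l^+$ (so $\theta<0$): the two resulting ranges overlap and their union contains $\mathcal{U}_{\epsilon-|t_{m_0}|}(n_1 d)$, so the union $L'$ of the two finite selections is a finite subset of $\tileable_{M_1}^*$ — with $M_1$ the largest $r'$ that occurs, enlarged to be $\ge m_0$ — which is $\delta/2$-dense in $\mathcal{U}_{\epsilon-|t_{m_0}|}(n_1 d)$ and therefore, as $|t_{m_0}|<\delta/100$, $\delta$-dense in $\mathcal{U}_\epsilon(n_1 d)$, while $\bigl(L'+(t_m)_{m=M_1}^\infty\bigr)\cap\mathcal{U}_\epsilon(n_1 d)\subseteq\mathcal{A}_{n_1}(\epsilon,d,R)$ by the inclusion just established.

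The step I expect to be the main obstacle is precisely this last one: the dense witnesses must be kept inside $\mathcal{A}_{n_1}$ in a fashion robust under all tail perturbations $t_m$, be simultaneously of the algebraic form $\tileable_{M_1}^*$, and yet be honestly $\delta$-dense all the way to the boundary of $\mathcal{U}_\epsilon(n_1 d)$; the two boundary collars of width $|t_{m_0}|$ are exactly what force the use of two base points $l^-,l^+$ on opposite sides of $d$ together with a choice of $m_0$ for which $|t_{m_0}|$ is small. The remaining ingredients — extracting finite $\delta/2$-dense subsets of dense sets in bounded intervals, the elementary interval arithmetic behind the coverage claim, and the Borel/finiteness bookkeeping — should all be routine.
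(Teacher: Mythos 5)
Your argument is correct, and its skeleton is the one the paper uses: pick \( l^{-}, l^{+} \in L \) on either side of \( d \), perturb by tail elements \( t_{j} \) to make the relevant \( \gcd \) small, invoke Lemma \ref{lem:generating-elements-of-An} to get density of the sets \( \mathcal{A}_{n} \), attach one extra summand \( l^{\pm} + t_{m} \) with a free tail index via item \eqref{item:additivity} of Lemma \ref{lem:additivity-of-An} to create the tame structure, and propagate to all larger \( n \) by adding copies of \( d \) via item \eqref{item:inclusion}. The genuine difference is where the finite, stratified witness comes from: the paper applies Lemma \ref{lem:generating-elements-of-An} to the auxiliary finite set \( \tilde{R} = \{d, x, y\} \), so that \( \mathcal{A}_{\tilde{N}}(\epsilon, d, \tilde{R}) \) is automatically finite and contained in a single \( \tileable_{M} \), and then takes \( \bar{L} = \bigl(\{l^{-}, l^{+}\} + \mathcal{A}_{\tilde{N}}(\epsilon, d, \tilde{R})\bigr) \cap \mathcal{U}_{\epsilon}(Nd\+) \); you instead apply the lemma to \( R \) itself and extract a finite \( \delta/2 \)-dense selection from the infinite set \( \mathcal{A}_{n_{1}-1}(\epsilon, d, R) \), fixing a representation of each selected element to bound its stratification index. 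Both devices work: the paper's makes finiteness and membership in \( \tileable_{M}^{*} \) automatic, while yours needs the per-element bookkeeping but in exchange supplies details the paper leaves implicit, namely the lattice-avoidance argument producing \( \gcd(a,b) < \delta/4 \) and the observation \( d \in \tileable_{m_{d}} \), which is exactly what legitimizes the translation step and your choice \( M = \max(M_{1}, m_{d}) \). One small slip: the union \( L' \) of your two selections need not be \( \delta/2 \)-dense in \( \mathcal{U}_{\epsilon - |t_{m_{0}}|}(n_{1}d\+) \) — a \( \delta/2 \)-window straddling \( n_{1}d \) can miss both selections when \( l^{\pm} \) lie near the endpoints of \( \mathcal{U}_{\epsilon}(d\+) \), since neither range then extends much past \( n_{1}d \) — but the statement you actually need, \( \delta \)-density of \( L' \) in \( \mathcal{U}_{\epsilon}(n_{1}d\+) \), does follow: any open window of length \( \delta \) has a sub-window of length at least \( \delta/2 \) on one side of \( n_{1}d \) which, after discarding the collar of width \( |t_{m_{0}}| < \delta/100 \), still contains an open interval of length \( \delta/2 \) inside the corresponding range \( \mathcal{U}_{\epsilon}(n_{1}d - \theta) \cap \mathcal{U}_{\epsilon - |t_{m_{0}}|}(n_{1}d\+) \), where the selection is \( \delta/2 \)-dense.
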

\begin{proof}
  Let \( r \in \mathbb{N} \) be such that \( R \) is \( r \)-tamely \( \epsilon \)-dense in
  \( \mathcal{U}_{\epsilon}(d\+) \), and pick
  \( L \subseteq \mathcal{U}_{\epsilon}(d\+) \cap \tileable_{r}^{*} \)
  witnessing this; in particular 
  \[ R = \bigl(L + (t_{m})_{m=r}^{\infty}\bigr) \cap \mathcal{U}_{\epsilon}(d\+). \]
  Since \( L \) is \( \epsilon \)-dense in \( \mathcal{U}_{\epsilon}(d\+) \), we may pick two elements
  \( l^{-}\mkern-10mu,\mkern10mu l^{+} \in L \) such that \( l^{-} < d < l^{+} \).  Using
  \( t_{m} \to 0 \), one may find sufficiently large \( m_{1} \) and \( m_{2} \) such that setting
  \( x = l^{-} + t_{m_{1}} \) and \( y  = l^{+} + t_{m_{2}} \) one has
  \begin{itemize}
  \item \( m_{1}, m_{2} \ge r \);
  \item \( x < d \) and \( y > d \);
  \item \( x, y \in \mathcal{U}_{\epsilon}(d\+) \) (thus \( x, y \in R \));
  \item \( \gcd(x - d, y - d\+) < \delta \).
  \end{itemize}
  Set \( a = x - d \) and \( b = y - d \); we have \( a < 0 < b \).  Let
  \( \tilde{R} = \{d, x, y\} \), note that
  \( \mathcal{A}_{1}(\epsilon,d,\tilde{R}) = \tilde{R} \) and Lemma
  \ref{lem:generating-elements-of-An} when applied to this \( \tilde{R} \), \( m=1 \),
  \( \epsilon \), \( \delta/2 \), \( d \) \( x \), and \( y \) produces
  \[ \tilde{N} = \tilde{N}_{\Lem \ref{lem:generating-elements-of-An}}(\tilde{R}, 1, \epsilon,
  \delta/2, d, x, y) \]
  such that for any \( n \ge \tilde{N} \) the set \( \mathcal{A}_{n}(\epsilon, d, \tilde{R}) \) is
  \( \delta/2 \)-dense in \( \mathcal{U}_{\epsilon}(nd\+) \).  
  Since \( \tilde{R} \), \( x \), and \( y \) are themselves functions\footnote{Recall that the
    sequence \( (t_{m})_{m=0}^{\infty} \) is fixed throughout the section, so dependence upon this
    sequence is ignored.} of \( R \), \( \epsilon \),
  \( \delta \), and \( d \), we have
  \( \tilde{N} = \tilde{N}(\epsilon, \delta, d, R) \).  Set \( N = \tilde{N} + 1 \) and
  \[ \bar{L} = \bigl(\{l^{-}\mkern-10mu,\mkern10mu l^{+}\} + \mathcal{A}_{\tilde{N}}(\epsilon, d,
  \tilde{R})\bigr) \cap \mathcal{U}_{\epsilon}(Nd\+). \]
  Note that
  \begin{itemize}
  \item \( \bar{L} \) is finite;
  \item \( \bar{L} \) is \( \delta \)-dense in \( \mathcal{U}_{\epsilon}(Nd\+) \), because for any
    \( \mathcal{U}_{\delta/2}(z) \subseteq \mathcal{U}_{\epsilon}(Nd\+) \)
    \[ \textrm{either }\ (z - \delta/2, z) - l^{-} \subseteq \mathcal{U}_{\epsilon}(\tilde{N}d\+)\
    \textrm{ or }\  (z, z + \delta/2) - l^{+} \subseteq \mathcal{U}_{\epsilon}(\tilde{N}d\+).  \]
  \end{itemize}
  For \( M \ge \max\{r, m_{1}, m_{2}\} \) we have \( \tilde{R} \subseteq \tileable_{M} \).  One has
  \[ \bar{L} \subseteq L + \mathcal{A}_{\tilde{N}}(\epsilon,d, \tilde{R}) \subseteq
  \tileable_{r}^{*} + \tileable_{M} \subseteq \tileable_{M}^{*}. \]
  Since \( \bar{L} \) is finite, by increasing \( M \) if necessary, we may also assume that
  \[ \bar{R} := \bar{L} + (t_{m})_{m=M}^{\infty} \subseteq \mathcal{U}_{\epsilon}(Nd\+)\ \textrm{
    and }\
  \{l^{-}\mkern-10mu,\mkern10mu l^{+}\} + (t_{m})_{m=M}^{\infty} \subseteq
  \mathcal{U}_{\epsilon}(d\+) . \]
  This guarantees that \( \bar{R} \subseteq \mathcal{A}_{N}(d, \epsilon, R) \).  Indeed, any \( z
  \in \bar{R} \) is of the form 
  \[ z = l^{\pm} + t_{m} + x\ \textrm{ for some }\ x \in \mathcal{A}_{\tilde{N}}(\epsilon, d,
  \tilde{R}) \textrm{ and } m \ge M. \]
  Since \( l^{\pm} + t_{m} \in R \) (because \( M \ge r \) and \( R \) is \( r \)-tamely
  \( \epsilon \)-dense), and since
  \[
  \mathcal{A}_{\tilde{N}}(\epsilon, d, \tilde{R}) \subseteq \mathcal{A}_{\tilde{N}}(\epsilon, d,
  R),\]
  item \eqref{item:additivity} of Lemma \ref{lem:additivity-of-An} applies, and we conclude that \(
  \bar{R} \subseteq \mathcal{A}_{N}(\epsilon,d,R) \).

  We claim these \( M \) and \( N \) satisfy the conclusion of the lemma.  The set \( \bar{R} \) is
  an \( M \)-tamely \( \delta \)-dense in \( \mathcal{U}_{\epsilon}(Nd\+) \) subset of
  \( \mathcal{A}_{N}(d, \epsilon, R) \).  Since \( d \in R \), by Lemma \ref{lem:additivity-of-An}
  one has \( \mathcal{A}_{n-1}(d,\epsilon, R) + d \subseteq \mathcal{A}_{n}(\epsilon, d, R) \), and
  so \( \bar{R} + (n-N)d \) is an \( M \)-tamely \( \delta \)-dense in
  \( \mathcal{U}_{\epsilon}(nd\+) \) subset of \( \mathcal{A}_{n}(\epsilon,d, R) \) for all
  \( n \ge N \).
\end{proof}

\begin{lemma}[cf.~Lemma 6.12 in \cite{slutsky2015}]
  \label{lem:delta-density-general}
  For any \( 0 < \epsilon \le 1 \), any \( 0 < \delta \le \epsilon \), any
  \( D > 0 \), and any \( r \in \mathbb{N} \) there exist
  \( N = N_{\Lem \ref{lem:delta-density-general}}(\epsilon, \delta, D, r) \) and
  \( M = M_{\Lem \ref{lem:delta-density-general}}(\epsilon, \delta, D, r) \) such that for any
  \( n \ge N \), any reals \( d_{i} \) and families
  \( R_{i} \subseteq \mathcal{U}_{\epsilon}(d_{i}) \), \( 1 \le i \le n \), satisfying
  \begin{itemize}
  \item \( 2\epsilon < d_{i} \le D \);
  \item \( R_{i} \) is \( r_{i} \)-tamely \( \epsilon/12 \)-dense in \( \mathcal{U}_{\epsilon}(d_{i})
    \) for some \( r_{i} \le r \);
  \end{itemize}
  the set \( \mathcal{A}_{n}\bigl( \epsilon, (d_{i})_{i=1}^{n}, (R_{i})_{i=1}^{n}\bigr) \) contains
  a subset that is \( M \)-tamely \( \delta \)-dense in \(
  \mathcal{U}_{\epsilon/2}\bigl(\sum_{i=1}^{n}d_{i}\bigr) \).
\end{lemma}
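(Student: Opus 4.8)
The plan is to reduce the general, varying-$(d_i)$ situation to the constant case handled by Lemma \ref{lem:delta-density-constant} by grouping the indices $1,\dots,n$ into blocks on which the pair $(d_i, R_i)$ — or rather a suitable discretization of it — is constant, and then splicing the resulting tamely-dense subsets together using the additivity property \eqref{item:additivity} of Lemma \ref{lem:additivity-of-An}. The first step is a \emph{finiteness reduction}: since $2\epsilon < d_i \le D$ and each $R_i$ is $r_i$-tamely $\epsilon/12$-dense with $r_i \le r$, each $R_i$ is determined by a finite set $L_i \subseteq \tileable_{r}^{*} \cap \mathcal{U}_{\epsilon}(d_i)$ together with the index $r_i$. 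After rounding each $d_i$ to a finite $\epsilon/12$-net of the interval $(2\epsilon, D]$ and noting that $\tileable_{r}^{*} \cap [0,D+1]$ is finite, there are only finitely many possibilities for the ``tame datum'' $\tau_i := (d_i \text{ rounded}, L_i, r_i)$; call this finite set of types $\mathcal{P} = \mathcal{P}(\epsilon, D, r)$, of cardinality $P$.

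Next, given $n$ large (to be quantified), partition $\{1,\dots,n\}$ according to which type in $\mathcal{P}$ the index carries. By pigeonhole, at least one type $\tau$ occurs on a block $B$ of size $|B| \ge n/P$; in fact, by choosing $n$ large enough we arrange that \emph{every} type that occurs at all occurs at least $N_{\Lem \ref{lem:delta-density-constant}}$ many times — or, more cleanly, we handle one dominant type at a time. For the block $B$ carrying type $\tau = (d, L, r')$: on $B$ the sets $R_i$ all coincide (as they are functions of the type), so we are in the constant setting, and Lemma \ref{lem:delta-density-constant} applied with parameters $\epsilon$, $\delta$, $d$, $R$ yields, provided $|B| \ge N_{\Lem \ref{lem:delta-density-constant}}$, a subset $Z_B \subseteq \mathcal{A}_{|B|}(\epsilon, d, R)$ that is $M_{\Lem \ref{lem:delta-density-constant}}$-tamely $\delta$-dense in $\mathcal{U}_{\epsilon}(|B|\,d)$. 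For every \emph{other} index $i \notin B$, we simply keep a single witness $x_i \in R_i \cap \mathcal{A}_1(\epsilon, d_i, R_i)$ — concretely $x_i = d_i \in R_i$, or an element of $L_i$ nearest to $d_i$ — which lies within $\epsilon$ of $d_i$; since these are finitely many fixed contributions the partial-sum constraint $|\sum_{i=1}^{r}(d_i - x_i)| < \epsilon$ must be tracked, which is where a little care is needed (see below). Summing: $\sum_{i \notin B} x_i$ is a fixed real within $\epsilon$ of $\sum_{i\notin B} d_i$, and by Lemma \ref{lem:additivity-of-An}\eqref{item:additivity} the translate $Z_B + \sum_{i \notin B} x_i$ sits inside $\mathcal{A}_n(\epsilon, (d_i), (R_i))$ and is $M$-tamely $\delta$-dense in $\mathcal{U}_{\epsilon/2}(\sum_{i=1}^{n} d_i)$ (the shrink from $\epsilon$ to $\epsilon/2$ in the target neighborhood absorbs the accumulated $\le \epsilon$ slack from the fixed coordinates; this is exactly why the hypothesis uses $\epsilon/12$-density for the $R_i$ and only asks for $\epsilon/2$-density of the conclusion). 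Taking $N := P \cdot N_{\Lem \ref{lem:delta-density-constant}}$ (so that some type-block has the required length) and $M$ the maximum of the finitely many $M_{\Lem \ref{lem:delta-density-constant}}$ over types, together with an increase to accommodate the fixed tail contributions $x_i$, gives the uniform bounds depending only on $\epsilon, \delta, D, r$.

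The main obstacle is bookkeeping of the \emph{ordered partial-sum constraints} $|\sum_{i=1}^{r}(d_i - x_i)| < \epsilon$: the definition of $\mathcal{A}_n$ is sensitive to the order of the indices, whereas the pigeonhole block $B$ is generally scattered through $\{1,\dots,n\}$, not an initial segment. To deal with this I would first establish an \emph{order-insensitivity / reordering} observation: because each individual discrepancy $d_i - x_i$ has absolute value $< \epsilon/12$ for the block coordinates and $< \epsilon$ for the chosen witnesses — and we can in fact force the fixed witnesses to have discrepancy $0$ by taking $x_i = d_i$ when $d_i \in R_i$, which the hypothesis $d\in R$ pattern and tameness of $R_i$ permit up to replacing $d_i$ by its rounded value inside $L_i$ — interleaving a controlled block with zero-discrepancy singletons keeps every partial sum bounded by the worst partial sum coming from $Z_B$ alone, namely $< \epsilon$. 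Formalizing ``$Z_B + (\text{zero-discrepancy fixed stuff}) \subseteq \mathcal{A}_n$'' is then a direct invocation of Lemma \ref{lem:additivity-of-An}\eqref{item:additivity} with $k = |B| + (\text{number of fixed blocks, each of size }1)$ and is routine. I expect the remaining verifications — Borelness is not at issue here since the statement is purely about subsets of $\mathbb{R}$, and the tameness of the output set follows because $Z_B$ is $M$-tamely dense and translation by a fixed element of $\tileable_{M'}^{*}$ (for $M'$ large) preserves tameness after enlarging $M$ — to be mechanical.
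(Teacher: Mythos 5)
Your overall architecture (finitely many ``tame types'', pigeonhole to extract a long constant block, Lemma \ref{lem:delta-density-constant} on that block, fill the remaining coordinates with fixed witnesses, and splice via the additivity of the \( \mathcal{A}_{n} \)-sets) is indeed the skeleton of the actual proof, but two of your concrete steps fail, and they fail at exactly the point where the real work lies. First, there are no ``zero-discrepancy singletons'': \( d_{i} \in R_{i} \) is \emph{not} a hypothesis of this lemma (unlike in Lemma \ref{lem:delta-density-constant}), and since \( d_{i} \) is an arbitrary real while \( R_{i} \) is a countable set of the form \( \bigl(L_{i} + (t_{m})_{m \ge r_{i}}\bigr) \cap \mathcal{U}_{\epsilon}(d_{i}) \), typically \( d_{i} \notin R_{i} \). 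Your fallback (take \( x_{i} \in L_{i} \) nearest to \( d_{i} \)) gives a discrepancy of order \( \epsilon/12 \) per coordinate, and over the up to \( n - |B| \) out-of-block coordinates these discrepancies accumulate without bound, so the running constraints \( \bigl|\sum_{i \le r}(d_{i}-x_{i})\bigr| < \epsilon \) in the definition of \( \mathcal{A}_{n}\bigl(\epsilon,(d_{i}),(R_{i})\bigr) \) are violated; ``replacing \( d_{i} \) by its rounded value inside \( L_{i} \)'' does not help, because the set you must land in is defined relative to the \emph{true} \( d_{i} \). Second, the claim that on a type block ``the sets \( R_{i} \) all coincide'' is false: \( R_{i} \) depends on \( d_{i} \) through the window \( \mathcal{U}_{\epsilon}(d_{i}) \), and a rounded value from an ad hoc net need not lie in \( R_{i} \) at all, so the hypothesis \( d \in R \) of Lemma \ref{lem:delta-density-constant} is not available for your constant block.

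The missing device, which is the heart of the paper's argument, is a sign-balanced discretization of \emph{all} the \( d_{i} \) into the tame structure itself: using \( \epsilon/12 \)-density of \( L_{i} \) and \( |t_{r}| < \epsilon/12 \), one picks \( \tilde{d}_{i} \in R_{i} \cap \tileable_{r} \) with \( |\tilde{d}_{i} - d_{i}| < \epsilon/4 \), choosing at each step whether \( \tilde{d}_{k+1} \) lies above or below \( d_{k+1} \) according to the sign of \( \sum_{i\le k}(\tilde{d}_{i}-d_{i}) \), so that \( \bigl|\sum_{i \le k}(\tilde{d}_{i}-d_{i})\bigr| < \epsilon/4 \) for every \( k \le n \). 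One then shrinks the radius and works with \( \tilde{R}_{i} = \bigl(\tilde{L}_{i} + (t_{m})_{m\ge r}\bigr) \cap \mathcal{U}_{3\epsilon/4}(\tilde{d}_{i}) \): now the pairs \( (\tilde{d}_{i},\tilde{R}_{i}) \) range over a genuinely finite set (since \( \tilde{d}_{i} \in \tileable_{r}\cap[0,D+1] \) and \( \tilde{L}_{i} \subseteq \tileable_{r}^{*}\cap[0,D+1] \)), pigeonhole applies, \( \tilde{d}_{i} \in \tilde{R}_{i} \) makes both Lemma \ref{lem:delta-density-constant} and the padding of the non-block coordinates by \( \tilde{d}_{j} \) legitimate, and the inclusions \( \mathcal{A}_{n}\bigl(3\epsilon/4,(\tilde{d}_{i}),(\tilde{R}_{i})\bigr) \subseteq \mathcal{A}_{n}\bigl(\epsilon,(d_{i}),(R_{i})\bigr) \) and \( \mathcal{U}_{\epsilon/2}\bigl(\sum_{i} d_{i}\bigr) \subseteq \mathcal{U}_{3\epsilon/4}\bigl(\sum_{i}\tilde{d}_{i}\bigr) \) (the latter exactly because the partial sums were balanced to within \( \epsilon/4 \)) yield the conclusion. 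Without this balancing step your interleaving argument cannot control the partial sums, so as written the proposal has a genuine gap.
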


\begin{proof}
  First of all, without loss of generality we may assume that \( r \) is so big that
  \( |t_{m}| < \epsilon/12 \) for all \( m \ge r \).  Note that for any given \( r' \) and \( D' > 0
  \) sets 
  \[ \tileable_{r'}^{*} \cap [0, D'] \textrm{ and } \tileable_{r'} \cap [0,D'] \textrm{ are
    finite}, \]
  so there are only finitely many possibilities to choose a subset
  \( L \subseteq \tileable_{r'}^{*} \cap [0,D'] \) and an element
  \( d \in \tileable_{r'} \cap [0, D'] \).  This implies that there are only finitely many pairs
  \( (d,R) \) satisfying
  \begin{itemize}
  \item \( d \le D+1 \);
  \item \( d \in \tileable_{r} \);
  \item \( R \subseteq \mathcal{U}_{3\epsilon/4}(d\+) \) is \( r \)-tamely \( \epsilon/12 \)-dense in
    \( \mathcal{U}_{3\epsilon/4}(d\+) \);
  \item \( d \in R \);
  \end{itemize}
  Let \( \mathcal{Q} \) denote the set of all pairs \( (d,R) \) satisfying the conditions above.
  We set
  \[ M = \mkern-6mu\max_{(d,R) \in \mathcal{Q}}\mkern-10mu M_{\Lem
    \ref{lem:delta-density-constant}}(3\epsilon/4,\delta,d,R) \quad \textrm{and} \quad N =
  |\mathcal{Q}| \cdot\mkern-10mu \max_{(d,R) \in \mathcal{Q}} \mkern-10mu N_{\Lem
    \ref{lem:delta-density-constant}}(3\epsilon/4,\delta,d,R), \]
  and claim that these \( N \) and \( M \) work.  Let \( n \ge N \) and \( d_{i} \), \( R_{i} \),
  \( 1 \le i \le n \), be given.

  Our plan is to alter \( d_{i} \) to \( \tilde{d}_{i} \) and then apply the pigeon-hole principle
  together with Lemma \ref{lem:delta-density-constant}.
  Let
  \( L_{i} \subseteq \mathcal{U}_{\epsilon}(d_{i}) \cap \mathcal{T}_{r_{i}}^{*} \) be such that
  \[ R_{i} = \bigl(L_{i} + (t_{m})_{m=r_{i}}^{\infty}\bigr) \cap \mathcal{U}_{\epsilon}(d_{i}). \]
  Note that since \( L_{i} \) is \( \epsilon/12 \)-dense in \( \mathcal{U}_{\epsilon}(d_{i}) \), for
  any \( i \) we may pick \( l_{1}, l_{2} \in L_{i} \) such that
  \[ d_{i} - \epsilon/6 < l_{1} < d_{i} - \epsilon/12 \textrm{ and } d_{i} + \epsilon/12 < l_{2} <
  d_{i} + \epsilon/6.\] Since \( |t_{r}| < \epsilon/12 \), this ensures
  \[ d_{i} - \epsilon/4 < l_{1} + t_{r} < d_{i} \textrm{ and } d_{i} < l_{2} + t_{r} < d_{i} +
  \epsilon/4.\]
  In other words, for any \( i \) we may pick elements 
  \[ x_{1} = l_{1} + t_{r} \in R_{i} \cap \tileable_{r} \textrm{ and }  x_{2} = l_{2} + t_{r} \in
  R_{i} \cap \tileable_{r} \] which are \( \epsilon/4 \)-close to \( d_{i} \) and are
  below \( d_{i} \) and above \( d_{i} \) respectively.

  Using this observation, the construction of \( \tilde{d}_{i} \) is simple.  For \( \tilde{d}_{1} \)
  we pick any element of \( R_{1} \cap \tileable_{r} \) which is \( \epsilon/4 \)-close to \( d_{1} \). If
  \( \tilde{d}_{k} \) has been chosen, we pick \( \tilde{d}_{k+1} \) to satisfy
  \begin{itemize}
  \item \( \tilde{d}_{k+1} \in R_{k+1} \cap \tileable_{r} \);
  \item \( \bigl| \tilde{d}_{k+1} - d_{k+1}  \bigr| < \epsilon/4 \);
  \item if \( \sum_{i=1}^{k} (\tilde{d}_{i} - d_{i}) < 0 \) we want \( \tilde{d}_{k+1} > d_{k+1} \),
    and we take \( \tilde{d}_{k+1} < d_{k+1} \) otherwise.
  \end{itemize}
  The resulting sequence \( \tilde{d}_{k} \) ensures that
  \[ \Bigl| \sum_{i=1}^{k}(\tilde{d}_{i} - d_{i}) \Bigr| < \epsilon/4 \quad \textrm{ holds for all
  } k \le n. \]
  Now set \( \tilde{L}_{i} = \mathcal{U}_{3\epsilon/4}\bigl(\tilde{d}_{i}\bigr) \cap L_{i} \) and let
  \[ \tilde{R}_{i} = \bigl(\tilde{L}_{i} + (t_{m})_{m=r}^{\infty}\bigr) \cap
  \mathcal{U}_{3\epsilon/4}(\tilde{d}\mkern1mu) \subseteq \mathcal{U}_{\epsilon}( d_{i} ). \]
  A typical location of \( \tilde{d}_{i} \) relative to \( d_{i} \) is depicted in Figure
  \ref{fig:relative-location}.  Note that \( (\tilde{d}_{i}, \tilde{R}_{i}) \in \mathcal{Q} \) and
  \[ \mathcal{A}_{n}\bigl( 3\epsilon/4,(\tilde{d}\mkern1mu)_{i=1}^{n}, (\tilde{R}_{i})_{i=1}^{n} \bigr)
  \subseteq \mathcal{A}_{n}\bigl( \epsilon, (d_{i})_{i=1}^{n}, (R_{i})_{i=1}^{n} \bigr). \]
  By the choice of \( N \) and the pigeon-hole principle, there must be indices
  \[ 1 \le k_{1} < k_{2} < \ldots < k_{\tilde{N}} \le n \]
  such that \( \tilde{d}_{k_{i}} = \tilde{d}_{k_{j}} =: \tilde{d} \) and
  \( \tilde{R}_{k_{i}} = \tilde{R}_{k_{j}} =: \tilde{R} \) for all \( 1 \le i,j \le \tilde{N} \) and
  \( \tilde{N} \ge N_{\Lem \ref{lem:delta-density-constant}}(\epsilon, \delta, \tilde{d}, \tilde{R})
  \).  Since \( \tilde{d}_{i} \in \tilde{R}_{i} \), any element of
  \( \mathcal{A}_{\tilde{N}}(3\epsilon/4, \tilde{d}, \tilde{R}) \) naturally corresponds to an
  element of
    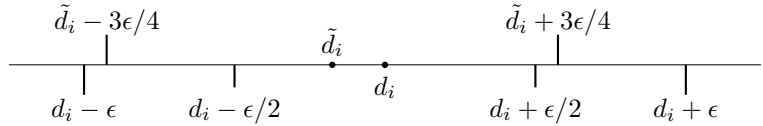
\begin{figure}[htb]
    \centering
    \begin{tikzpicture}
      \draw[very thin] (-1,0) -- (9,0);
      \draw[thick] (0,0.0) -- (0,-0.4);
      \filldraw (4,0) circle (1pt);
      \draw[thick] (8,0.0) -- (8,-0.4);
      \draw (0,-0.6) node {\( d_{i} - \epsilon \)};
      \draw (8,-0.6) node {\( d_{i} + \epsilon \)};
      \draw (4,-0.3) node {\( d_{i} \)};
      \filldraw (3.3,0) circle (1pt);
      \draw (3.3,+0.3) node {\( \tilde{d}_{i} \)};
      \draw[thick] (0.3, 0.4) -- (0.3,-0.0);
      \draw[thick] (6.3, 0.4) -- (6.3,-0.0);
      \draw (0.3,0.6) node {\( \tilde{d}_{i} - 3\epsilon/4 \)};
      \draw (6.3,0.6) node {\( \tilde{d}_{i} + 3\epsilon/4 \)};
      \draw[thick] (2,0.0) -- (2,-0.3);
      \draw[thick] (6,0.0) -- (6,-0.3);
      \draw (2,-0.6) node {\( d_{i} - \epsilon/2 \)};
      \draw (6,-0.6) node {\( d_{i} + \epsilon/2 \)};
    \end{tikzpicture}
    \caption{Location of \( \tilde{d}_{i} \) relative to \( d_{i} \).}
    \label{fig:relative-location}
  \end{figure}
  \( \mathcal{A}_{n} \bigl(\epsilon, (\tilde{d}_{i})_{i=1}^{n},
  (\tilde{R}_{i})_{i=1}^{n} \bigr) \): an element
  \( x \in \mathcal{A}_{\tilde{N}}(3\epsilon/4, \tilde{d}, \tilde{R}) \) of the form
  \( x = \sum_{i=1}^{\tilde{N}} x_{i} \), \( x_{i} \in \tilde{R} \), corresponds to
  \( y = \sum_{j=1}^{n} y_{j} \) given by
  \begin{displaymath}
    y_{j} =
    \begin{cases}
      x_{i} & \textrm{if } j = k_{i};\\
      \tilde{d}_{j} & \textrm{ otherwise}. 
    \end{cases}
  \end{displaymath}
  By the choice of \( \tilde{N} \) the set
  \( \mathcal{A}_{\tilde{N}}(3\epsilon/4, \tilde{d}, \tilde{R}) \) contains a subset which is
  \( M \)-tamely \( \delta \)-dense in \( \mathcal{U}_{3\epsilon/4}\bigl(\tilde{N}\tilde{d}\bigr) \)
  and therefore
  \( \mathcal{A}_{n} \bigl(3\epsilon/4, (\tilde{d}_{i})_{i=1}^{n}, (\tilde{R}_{i})_{i=1}^{n} \bigr)
  \) has a subset that is \( M \)-tamely \( \delta \)-dense in
  \( \mathcal{U}_{3\epsilon/4}\bigl(\sum_{i=1}^{n} \tilde{d_{i}}\bigr) \).  Finally,
  \[ \mathcal{A}_{n}\bigl(3\epsilon/4, (\tilde{d}_{i})_{i=1}^{n}, (\tilde{R}_{i})_{i=1}^{n}\bigr)
  \subseteq \mathcal{A}_{n}\bigl(\epsilon, (d_{i})_{i=1}^{n}, (R_{i})_{i=1}^{n}\bigr) \textrm{ and }
  \mathcal{U}_{\epsilon/2}\bigl(\sum_{i=1}^{n} d_{i}\bigr) \subseteq
  \mathcal{U}_{3\epsilon/4}\bigl(\sum_{i=1}^{n} \tilde{d}_{i}\bigr), \]
  implying that \( \mathcal{A}_{n}\bigl(\epsilon, (d_{i})_{i=1}^{n}, (R_{i})_{i=1}^{n}\bigr) \) contains
  a subset which is \( M \)-tamely \( \delta \)-dense in
  \( \mathcal{U}_{\epsilon/2}\bigl(\sum_{i=1}^{n} d_{i}\bigr) \) as desired.
\end{proof}

\begin{theorem}
  \label{thm:argitrarily-large-regular-blocks}
  Let \( S = \{ \upsilon + t_{m} : m \in \mathbb{N} \} \).  Any free Borel flow admits a
  cross section \( \mathcal{C} \) that has arbitrarily large \( \oer[S]{\mathcal{C}} \)-classes
  within every orbit.
\end{theorem}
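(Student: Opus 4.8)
The theorem asks for arbitrarily large $S$-regular blocks rather than full $S$-regularity, since a non-sparse flow generally carries no $S$-regular cross section at all; the plan is to realize $\mathcal{C}$ as the limit of an increasing sequence of cross sections $\mathcal{C}_{0}\subseteq\mathcal{C}_{1}\subseteq\cdots$, following the method of \cite{slutsky2015} but using the sets $\tileable_{m}$, $\tileable_{m}^{*}$ and the notion of tame $\delta$-density in place of the simpler ingredients there. At stage $n$ one carries, besides $\mathcal{C}_{n}$, a finite Borel equivalence relation $\oer{n}$ on $\mathcal{C}_{n}$ — the ``regular blocks fixed so far'' — and maintains the invariants that every $\oer{n}$-class is an interval of $\mathcal{C}_{n}$ with all internal gaps in $S$ (so it sits inside one $\oer[S]{\mathcal{C}_{n}}$-class and, once fixed, is never disturbed again), that the lengths and internal shapes of these blocks range over a \emph{finite} list of tame types, and that for a prescribed sequence $N_{n}\to\infty$ a designated $\oer{n}$-class of size $\ge N_{n}$ is present on every orbit. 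Keeping $\oer{n}$ and the list of types, rather than just $\mathcal{C}_{n}$, is the whole point of this section's bookkeeping: the step that assembles larger blocks out of smaller ones is governed by Lemma~\ref{lem:delta-density-general}, whose pigeon-hole argument requires that only finitely many menus $R_{i}$ ever occur, which is exactly what tameness provides.

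To start, fix $0<\epsilon\le 1$ with $2\epsilon<\inf S$ and, by the usual passage to a sub-cross-section, a cross section $\mathcal{C}_{0}$ whose $\oer{0}$-classes are short $S$-regular blocks with arbitrarily long ``junk'' gaps between them. For the inductive step I would use an auxiliary cross section $\mathcal{M}_{n}$ with minimal gap much larger than $N_{n+1}\upsilon$, chosen so that its gaps fall into the unfinished junk part of $\mathcal{C}_{n}$; since $\mathcal{M}_{n}$ meets every orbit bi-infinitely, this guarantees that a new large block gets installed on every orbit. Inside each such gap one wants to string together the pieces already present there into a single $S$-regular block of size $\ge N_{n+1}$. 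The set of total lengths attainable this way is, once $N_{n+1}$ is large enough, a tamely $\delta$-dense subset of an $\epsilon$-neighborhood of the ideal length: this is the content of Lemma~\ref{lem:delta-density-constant} when the pieces are uniform and of Lemma~\ref{lem:delta-density-general} when they are the (finitely many) tame types produced so far — and the defining constraint on the sets $\mathcal{A}_{n}$, that all partial sums stay within $\epsilon$ of the ideal, is precisely what keeps the positioning errors from accumulating as the blocks grow. Since $\delta$ may be chosen arbitrarily small and the leftover junk at the two ends of the new block may be given any lengths bounded below by the lacunarity constant (and chosen outside $S$), this assembly can be carried out in a Borel way simultaneously in every gap of $\mathcal{M}_{n}$, producing $\mathcal{C}_{n+1}$, $\oer{n+1}$ and the next finite list of tame types while leaving all earlier designated blocks untouched.

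The construction is organized so that each point of the ambient space is, from some stage on, either inside a fixed block or inside a fixed junk gap; hence the $\mathcal{C}_{n}$ stabilize locally, their union $\mathcal{C}$ is a Borel cross section, and it is lacunary because all its gaps are bounded below by $\min\{\text{lacunarity constant},\inf S\}>0$. For each $N$, the stage $n$ with $N_{n}\ge N$ installs on every orbit an $\oer[S]{\mathcal{C}}$-class of size $\ge N$, so $\mathcal{C}$ has arbitrarily large $\oer[S]{\mathcal{C}}$-classes within every orbit, as required. I expect the genuine difficulty to lie entirely on the combinatorial side and to be already resolved — namely the geometry of the sets $\mathcal{A}_{n}$ (Lemmas~\ref{lem:additivity-of-An}–\ref{lem:delta-density-general}), and in particular the need, addressed by tameness, to keep the repertoire of block types finite so that the pigeon-hole in Lemma~\ref{lem:delta-density-general} applies; what remains is the careful but routine Borel organization ensuring that old blocks persist, that the list of types stays finite and tame, that enough unfinished junk remains in every orbit for the next stage, and that every point is eventually finalized.
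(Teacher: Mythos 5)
You have correctly identified the combinatorial engine of this section (Lemmas~\ref{lem:additivity-of-An}--\ref{lem:delta-density-general}, tameness as the device that keeps the repertoire of menus finite for the pigeon-hole, and the orbit-by-orbit installation of ever larger blocks), but the structural frame you wrap around it --- an increasing sequence \( \mathcal{C}_{0} \subseteq \mathcal{C}_{1} \subseteq \cdots \) in which a block, once fixed, ``is never disturbed again,'' with \( \mathcal{C} = \bigcup_{n} \mathcal{C}_{n} \) obtained by local stabilization --- cannot carry the assembly step, and it is not how the argument works. To merge the pieces already lying inside a long gap into a single \( \oer[S]{\mathcal{C}_{n+1}} \)-class, \emph{every individual} gap between adjacent pieces must be made an element of \( \tileable(S) \) (only then can it be tiled by intervals with lengths in \( S \)); since those pieces were placed at arbitrary real positions by the earlier stages, these gaps are generically not in \( \tileable(S) \), and no freedom in choosing the endpoints, the total length, or the amount of leftover junk changes any single inter-piece distance. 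The sets \( R_{i} \) and \( \mathcal{A}_{n}\bigl(\epsilon,(d_{i}),(R_{i})\bigr) \) you invoke encode exactly the opposite of your invariant: an element \( x_{i} \in R_{i} \) is a slightly \emph{perturbed} value of the \( i \)-th gap, i.e.\ the pieces are translated, and the partial-sum constraint in the definition of \( \mathcal{A}_{n} \) bounds the cumulative displacement of each piece by \( \epsilon \). The ``set of total lengths attainable'' is tamely dense only because the pieces are allowed to move; with everything frozen there is nothing to choose.

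In the actual proof nothing is nested: \( \mathcal{C}_{n+1} \) is obtained from \( \mathcal{C}_{n} \) by rigidly shifting each rank-\( k \) block by at most \( \epsilon_{k} \) --- the admissible shifts of a rank-\( n \) block being prescribed by the tame menu \( \bar{R} \subseteq \mathcal{A}_{N} \) attached to it at the previous stage, which is precisely what guarantees that all previously tiled gaps inside it can be consistently re-adjusted and stay in \( \tileable(S) \) --- and then tiling the newly rationalized gaps. A given point may be moved at every stage (so the \( \mathcal{C}_{n} \) do \emph{not} stabilize locally), and the final cross section is the set of limits of these summably small displacements, exactly as in the limit construction of Theorem~\ref{thm:regular-S-sections-for-sparse-flows}; lacunarity of the limit uses \( \sum_{n}\epsilon_{n} \) being small relative to \( \min\{1,\upsilon+t_{0},\upsilon\} \), not a lower bound by ``the lacunarity constant or \( \inf S \).'' So the concrete gap in your proposal is the missing mechanism for making inter-block distances tileable: either you abandon the ``never moved'' invariant and carry the tame menus of admissible future positions together with the \( \epsilon_{n} \)-bookkeeping and a genuine limit argument, or your induction cannot get past its first merging step.
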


\begin{proof}
  Let \( \mff \) be a free Borel flow on a standard Borel space.  Set
  \[ \epsilon_{n} = 2^{-n-1}\frac{\min\{ 1, \upsilon + t_{0}, \upsilon\}}{3}, \quad n \in \mathbb{N}.\]
  Since \( S \) generates a dense subgroup of \( \mathbb{R} \), by Lemma
  \ref{lem:dense-subgroup-asymp-dense-semigroup} we may find
  \[ K_{0} > \max\{\upsilon, \upsilon+t_{0}\} + 1 \]
  and \( M_{0} \) so big that \( \tileable_{M_{0}}^{*} \) is \( \epsilon_{0}/12 \)-dense in
  \( [K_{0}-1,\infty) \).

  Let \( \mathcal{C}_{0} \) be a cross section of \( \mff \) such that
  \( \rgap[\mathcal{C}_{0}](x) \in [K_{0} + 1, K_{0}+2] \) for all \( x \in \mathcal{C}_{0} \) (it
  exists by \cite[Corollary 2.3]{slutsky2015}).  Note that \( \oer[S]{\mathcal{C}_{0}} \) is the
  trivial equivalence relation, since \( S \subseteq [0, K_{0}] \).  Set \( D_{0} = K_{0} + 3 \),
  \( N_{0} = 1 \), and
  \begin{displaymath}
    \begin{aligned}
      N_{n+1} &= N_{\Lem \ref{lem:delta-density-general}}(\epsilon_{n}, \epsilon_{n+1}/12, D_{n},
      M_{n})\\
      M_{n+1} &= M_{\Lem \ref{lem:delta-density-general}}(\epsilon_{n}, \epsilon_{n+1}/12, D_{n},
      M_{n})\\
      D_{n+1} &= (2N_{n+1}+2)D_{n}.
    \end{aligned}
  \end{displaymath}

  We now construct cross sections \( \mathcal{C}_{n} \) inductively as follows.  We begin by
  selecting a sub cross section of \( \mathcal{C}_{0} \) which consists of pairs of adjacent points
  in \( \mathcal{C}_{0} \) with at least \( N_{1} \) at most \( 2N_{1} + 1 \) points between any two
  pairs.  By the choice of \( K_{0} \), within each pair we may move the right point by at most \(
  \epsilon_{0} \) so as to make the gap an element of \( \tileable_{M_{0}} \).  This means that we
  can add points into the resulting gap so that the distances between adjacent points will be elements
  of \( S \).  This concludes the construction of \( \mathcal{C}_{1} \).  The process is illustrated
  in Figure \ref{fig:cosparse-constructing-C1}.
  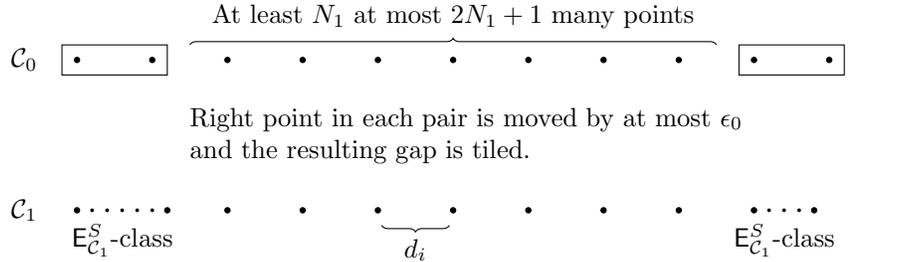
\begin{figure}[htb]
    \centering
    \begin{tikzpicture}
      \draw (-0.7,0) node {\( \mathcal{C}_{0} \)};
      \foreach \x in {0, 1, ..., 10, 11} {
        \filldraw (\x, 0) circle (1pt);
      }
      \draw (-0.2, -0.2) rectangle (1.2,0.2);
      \draw (8.8, -0.2) rectangle (10.2,0.2);
      \draw[decoration={
        brace,
        raise=0.2cm
      },
      decorate
      ] (1.5,0) -- (8.5,0) node [pos=0.5,anchor=south,yshift=0.3cm]{At least \( N_{1} \) at most \(
        2N_{1} + 1 \) many points};
      \draw (-0.7,-2) node {\( \mathcal{C}_{1} \)};
      \foreach \x in {0, 1.2, 2, 3, ..., 9, 9.8, 11} {
        \filldraw (\x, -2) circle (1pt);
      }
      \draw (5.5, -1) node[text width=8cm] {Right point in each pair is moved by at most \(
        \epsilon_{0} \) and the resulting gap is tiled.};
      \foreach \x in {0,0.2,...,1.2, 9, 9.2,..., 9.8} {
        \filldraw (\x, -2) circle (0.5pt);
      }
      \draw[decoration={
        brace,
        raise=0.2cm,
        mirror
      },
      decorate
      ] (4.05,-2) -- (4.95,-2) node [pos=0.5,anchor=south,yshift=-8mm]{\( d_{i} \)};
      \draw (0.6, -2.4) node {\( \oer[S]{\mathcal{C}_{1}} \)-class};
      \draw (9.4, -2.4) node {\( \oer[S]{\mathcal{C}_{1}} \)-class};
    \end{tikzpicture}
    \caption{Construction of \( \mathcal{C}_{1} \)}
    \label{fig:cosparse-constructing-C1}
  \end{figure}

  We call \( \oer[S]{\mathcal{C}_{1}} \)-classes, constructed via ``tiling the gap'' process, rank
  \( 1 \) blocks, and we refer to ``isolated points'' in \( \mathcal{C}_{1} \) as to rank \( 0 \)
  blocks.  It is now a good time to explain the choice of \( N_{1} \), \( M_{1} \), and \( D_{1} \).
  First of all, \( D_{0} \) represents an upper bound on the distance between adjacent points in
  \( \mathcal{C}_{0} \).  \( D_{0} \) was taken with an excess to ensure that it remains a bound
  even if each point is moved by at most \( \sum \epsilon_{k} \).  \( D_{1} \) respectively
  represents an upper bound on the distance between adjacent rank \( 1 \) blocks in
  \( \mathcal{C}_{1} \).  Between any two adjacent rank \( 1 \) blocks there are at least
  \( N_{1} \)-many rank \( 0 \) blocks, and therefore there are at least \( N_{1} \)-many gaps of
  size at least \( K_{0} \) each.  Let \( d_{1}, \ldots, d_{n} \) denote the lengths of these gaps
  (see Figure \ref{fig:cosparse-constructing-C1}).  By the choice of \( N_{1} \) and Lemma
  \ref{lem:delta-density-general}, each \( d_{i} \) can be
  distorted by at most \( \epsilon_{0} \) into \( \tilde{d}_{i} \) in such a way that
  \( \tilde{d}_{i} \in \tileable_{M_{0}} \) and the whole sum \( \sum_{i=1}^{n}d_{i} \) is distorted
  by at most \( \epsilon_{1}/12 \).  In fact, we have many ways of doing so.  To be more specific, let
  \[ L_{i} = \mathcal{U}_{\epsilon}(d_{i}) \cap \tileable_{M_{0}}^{*} \textrm{ and } R_{i} =
  \bigl(L_{i} + (t_{m})_{m=M_{0}}^{\infty}\bigr) \cap \mathcal{U}_{\epsilon}(d_{i}).  \]
  The sets \( R_{i} \) satisfy the assumptions of Lemma \ref{lem:delta-density-general} and the set
  \( \mathcal{A}_{n}\bigl(\epsilon, (d_{i})_{i=1}^{n}, (R_{i})_{i=1}^{n}\bigr) \) corresponds to
  possible ways of moving the right rank \( 1 \) block in Figure \ref{fig:cosparse-constructing-C1},
  when each rank \( 0 \) point in the midst is moved according to \( R_{i} \).  By the conclusion of
  Lemma \ref{lem:delta-density-general}, there is a set \( \bar{R} \) which is \( M_{1} \)-tamely
  \( \epsilon_{1}/12 \)-dense in
  \( \mathcal{U}_{\epsilon_{0}/2}\bigl(\sum_{i=1}^{n}d_{i}\bigr) =
  \mathcal{U}_{\epsilon_{1}}\bigl(\sum_{i=1}^{n}d_{i}\bigr) \).  To each pair of adjacent rank
  \( 1 \) blocks we associate such a set \( \bar{R} \), and during the next step of the
  construction we shall move rank \( 1 \) blocks only as prescribed by \( \bar{R} \).

  \begin{figure}[hbt]
    \centering
    \begin{tikzpicture}
      \foreach \x in {0,0.5, ..., 13}{
        \filldraw (\x, 0) circle (1pt);
        \filldraw (\x, -2) circle (1pt);
      }
      \foreach \x in {0,2,...,12}{
        \foreach \y in {0,0.1,...,0.5}{
          \filldraw (\x+\y,0) circle (0.5pt);
          \filldraw (\x+\y,-2) circle (0.5pt);
        }
      }
      \draw (-0.2, -0.2) rectangle (2.7,0.2);
      \draw (9.8, -0.2) rectangle (12.7,0.2);
      \foreach \x in {0, 0.1, ..., 2.5, 10,10.1,...,12.5}{
        \filldraw (\x,-2) circle (0.5pt);
      }
      \draw[decoration={
        brace,
        raise=0.2cm
      },
      decorate
      ] (3.5,0) -- (9.0,0) node [pos=0.5,anchor=south,yshift=0.3cm]{At least \( N_{2} \) at most \(
        2N_{2} + 1 \)-many rank \( 1 \) blocks};
      \draw (6.5, -1) node[text width=10cm] {Moving each rank \( 1 \) block by at most \(
        \epsilon_{1} \) as prescribed by \( \bar{R} \), moving each rank \( 0 \) point in rectangles
      by at most \( \epsilon_{0} \), and tiling the gaps.};
      \draw (1.25,-2.3) node {rank \( 2 \) block};
      \draw (11.25,-2.3) node {rank \( 2 \) block};
    \end{tikzpicture}
    \caption{Construction of \( \mathcal{C}_2 \)}
    \label{fig:cosparse-constructing-C2}
  \end{figure}
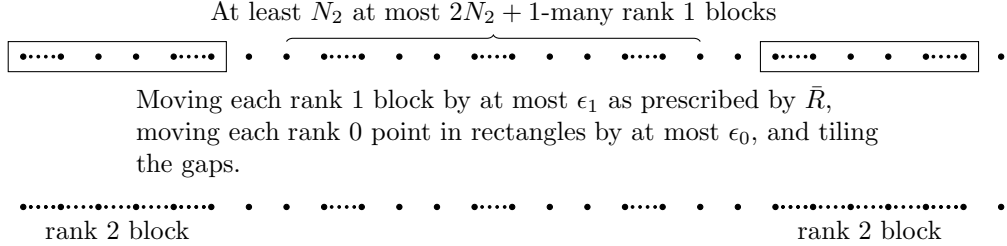

  The construction of \( \mathcal{C}_{2} \) from \( \mathcal{C}_{1} \) is analogous to the base
  step.  We pick pairs of adjacent rank \( 1 \) blocks in \( \mathcal{C}_{1} \) with at least
  \( N_{2} \) at most \( 2N_{2}+1 \) rank \( 1 \) blocks in between.  Within each pair the right
  rank \( 1 \) block is moved according to (any element of) the corresponding \( \bar{R} \), which
  results in moving each rank \( 0 \) point in between according to \( R_{i} \).  All the gaps can
  now be tiled (i.e., partitioned into segments of lengths in \( S \)), resulting in a cross section
  \( \mathcal{C}_{2} \).  Since the argument in Lemma \ref{lem:delta-density-general} provides an
  algorithm for constructing the required sets, the process can be performed in a Borel way.  The
  \( \oer[S]{\mathcal{C}_{2}} \)-classes obtained by tiling gaps in \( \mathcal{C}_{1} \) are called
  rank \( 2 \) blocks (see Figure \ref{fig:cosparse-constructing-C2}).  The procedure continues in a
  similar fashion \textemdash{} to define \( \mathcal{C}_{3} \) we take sufficiently distant pairs
  of adjacent rank \( 2 \) blocks, move the right rank \( 2 \) block within each pair by at most
  \( \epsilon_{2} \) in a way that moves each rank \( 1 \) block in between by at most
  \( \epsilon_{1} \), and each rank \( 0 \) block by at most \( \epsilon_{0} \) and turns all the
  gaps within the pair into elements of \( \tileable(S) \).  We add points to tile these gaps, thus
  creating rank \( 3 \) blocks and cross section \( \mathcal{C}_{3} \).

  When passing from \( \mathcal{C}_{n} \) to \( \mathcal{C}_{n+1} \), each block of rank \( k \) is
  moved by no more than \( \epsilon_{k} \), and if any point is moved, it becomes an element of rank
  \( n+1 \) block in \( \mathcal{C}_{n+1} \).  Since \( \sum_{i=0}^{\infty} \epsilon_{n} \)
  converges, this ensures that each point ``converges to a limit'' and the required cross section
  \( \mathcal{C} \) consists of all the ``limit points''.  It is evident from the construction that
  \( \mathcal{C} \) has arbitrarily large \( \oer[S]{\mathcal{C}} \)-blocks within each orbit.  The
  formal details of defining the limit cross section are no different from those of Theorem 9.1 in
  \cite{slutsky2015} and are similar to those in Theorem
  \ref{thm:regular-S-sections-for-sparse-flows}; we therefore omit them.
\end{proof}

\hypersetup{bookmarksdepth=-2}%

\section{\wsign Cross  sections under construction}

\vspace{-7mm}
\hypertarget{construction}{}
\vspace{7mm}
\bookmark[level=section,dest=construction]{\thesection. Cross  sections under construction}
\hypersetup{bookmarksdepth}

\begin{theorem}
  \label{thm:main-theorem}
  Let \( \mff \) be a free Borel flow on a standard Borel space \( X \) and let
  \( S \subseteq \mathbb{R}^{>0} \) be a non-empty set bounded away from zero.
  \begin{enumerate}[(I)]
  \item\label{item:lambda-regular} Assume \( \langle S \+ \rangle = \lambda\mathbb{Z} \)\kern1pt,
    \( \lambda > 0 \). The flow \( \mff \) admits an \( S \)-regular cross section if and only if it
    admits a \( \{\lambda\} \)-regular cross section.
  \item\label{item:dense-regular} Assume \( \langle S \+ \rangle\) is dense in
    \( \mathbb{R} \)\kern1pt, but \( \langle S \cap [0, n] \rangle = \lambda_{n} \mathbb{Z} \)\kern1pt,
    \( \lambda_{n} \ge 0 \), for all natural \( n \in \mathbb{N} \) (we take \( \lambda_{n} = 0 \)
    if \( S \cap [0,n] \) is empty).  The flow \( \mff \) admits an \( S \)-regular cross section if
    and only if the phase space \( X \) can be partitioned into \( \mff \)-invariant Borel pieces
    (some of which may be empty)
    \[ X = \Bigl(\bigsqcup_{i=0}^{\infty} X_{i}\Bigr) \sqcup X_{\infty} \]
    such that \( \mff|_{X_{\infty}} \) is sparse and \( \mff|_{X_{i}} \) admits a
    \( \{\lambda_{i}\} \)-regular cross section.
  \item\label{item:bounded-dense-regular} Assume there is \( n \in \mathbb{N} \) such that
    \( \langle S \cap [0,n] \rangle \) is dense in \( \mathbb{R} \).  Any free flow admits an
    \( S \)-regular cross section.
  \end{enumerate}
\end{theorem}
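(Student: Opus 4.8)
The plan is to reduce each item to results already established in the excerpt, handling the ``only if'' and ``if'' directions separately. For item \eqref{item:lambda-regular}, the ``only if'' direction is immediate: if $\mathcal{C}$ is $S$-regular then every gap lies in $S \subseteq \langle S\,\rangle = \lambda\mathbb{Z}$, so in fact $\mathcal{C}$ is $\{k\lambda : k \ge 1\}$-regular, and one recovers a $\{\lambda\}$-regular cross section by adding, inside each gap of length $k\lambda$, the $k-1$ equally spaced intermediate points (this is Borel since $S$, hence every gap, is bounded away from zero and the gap function is Borel). The ``if'' direction is the substantive one: given a $\{\lambda\}$-regular cross section $\mathcal{D}$, I want to merge its points into blocks whose total length lies in $\mathcal{T}(S)$ and then re-tile. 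Since $\langle S\,\rangle = \lambda\mathbb{Z}$, write $S = \{\lambda n_j\}$ with $\gcd\{n_j\} = 1$; then $\mathcal{T}(S)$ contains $\lambda k$ for all sufficiently large $k$ (a Chicken McNugget / numerical-semigroup fact), say for $k \ge k_0$. Now from $\mathcal{D}$ pass to the sub cross section keeping every $k_0$-th point, say; each new gap has length $\lambda k$ with $k \ge k_0$, hence lies in $\mathcal{T}(S)$, and one re-tiles each such gap by a fixed Borel choice of $S$-decomposition exactly as in the final paragraphs of Theorem~\ref{thm:regular-S-sections-for-sparse-flows} (the map $\zeta$ there). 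I expect the only delicate point to be the Borelness of the merging/re-tiling, but this is routine given that $\phi_{\mathcal{D}}$ and $\rgap[\mathcal{D}]$ are Borel.

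For item \eqref{item:dense-regular}, the ``if'' direction follows by gluing: on $X_\infty$ the flow is sparse and $\langle S\,\rangle$ is dense in $\mathbb{R}$, so Theorem~\ref{thm:regular-S-sections-for-sparse-flows} gives an $S$-regular cross section there; on each $X_i$ the flow has a $\{\lambda_i\}$-regular cross section, and since $\langle S \cap [0,i]\,\rangle = \lambda_i\mathbb{Z}$, item \eqref{item:lambda-regular} applied to the set $S \cap [0,i]$ (which is bounded away from zero and bounded above, with group $\lambda_i\mathbb{Z}$) upgrades this to an $(S\cap[0,i])$-regular, hence $S$-regular, cross section of $X_i$; the disjoint union of these cross sections over the invariant Borel partition is an $S$-regular cross section of $X$. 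The ``only if'' direction is the heart of the matter. Given an $S$-regular cross section $\mathcal{C}$, I would analyze, for each $x \in \mathcal{C}$, the gaps $\rgap[\mathcal{C}](\phi_{\mathcal{C}}^k(x))$: either along the orbit of $x$ the gaps are ``bi-infinitely unbounded,'' in which case that orbit lies in the sparse part $X_\infty$, or there is a uniform bound $n$ on all gaps along the orbit, in which case all gaps lie in $S \cap [0,n]$, so $\mathcal{C}$ restricted to that orbit is $(S\cap[0,n])$-regular with all gaps in $\lambda_n\mathbb{Z}$, forcing a $\{\lambda_n\}$-regular structure on that orbit. The sets $X_i = \{$orbits whose $\mathcal{C}$-gaps are bounded by $i$ but not by $i-1\}$ and $X_\infty = \{$orbits with unbounded $\mathcal{C}$-gaps$\}$ are $\mff$-invariant and Borel (definable from the Borel data $\phi_{\mathcal{C}}, \rgap[\mathcal{C}]$ by countable quantification over the orbit), and $\mathcal{C} \cap X_\infty$ witnesses sparseness while $\mathcal{C} \cap X_i$ provides, after the reduction in item \eqref{item:lambda-regular}, a $\{\lambda_i\}$-regular cross section of $X_i$. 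The main obstacle I anticipate is verifying that this partition is genuinely Borel and that the ``bi-infinitely unbounded'' condition is the right notion of sparseness to invoke — i.e.\ matching it to the definition of sparse flow used in Theorem~\ref{thm:regular-S-sections-for-sparse-flows}; one may need to refine $X_\infty$ or pass to a sub cross section to get the gaps bi-infinitely unbounded on every orbit rather than merely unbounded.

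For item \eqref{item:bounded-dense-regular}, fix $n$ with $\langle S \cap [0,n]\,\rangle$ dense in $\mathbb{R}$. Since $S \cap [0,n] \subseteq [\,c, n\,]$ for the lacunarity constant $c$, I can write $S \cap [0,n] = \{\upsilon + t_m : m \in \mathbb{N}\}$ for a suitable $\upsilon$ and a sequence $t_m \to 0$ (or reduce to a countable dense-generating subset and arrange this form), and apply Theorem~\ref{thm:argitrarily-large-regular-blocks}: every free Borel flow admits a cross section $\mathcal{C}$ with arbitrarily large $\oer[S\cap[0,n]]{\mathcal{C}}$-blocks within every orbit. The point is now that these large regular blocks exhaust each orbit in the limit — the construction in Theorem~\ref{thm:argitrarily-large-regular-blocks} produces, orbit by orbit, rank-$k$ blocks of unbounded size, and I would argue that iterating produces a cross section in which \emph{every} gap (not merely arbitrarily large chains of them) lies in $\mathcal{T}(S \cap [0,n]) \subseteq \mathcal{T}(S)$, hence an $S$-regular cross section; concretely, once a block is so large that the next pair of ``merging anchors'' straddles it, the left-over single gaps on either end can themselves be absorbed, so in the limit no non-$S$ gap survives. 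The potential obstacle here is the same limiting/exhaustion bookkeeping: ensuring that the nested construction of Theorem~\ref{thm:argitrarily-large-regular-blocks} can be run so that the ``rank $0$'' (untiled) portion shrinks to nothing on every orbit; if a direct appeal is not clean, I would instead combine Theorem~\ref{thm:argitrarily-large-regular-blocks} with a sparseness dichotomy as in item \eqref{item:dense-regular} — the orbits not covered by arbitrarily large blocks form a sparse piece, to which Theorem~\ref{thm:regular-S-sections-for-sparse-flows} applies directly.
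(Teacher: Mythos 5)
Your items \eqref{item:lambda-regular} and \eqref{item:dense-regular} follow essentially the paper's route, but item \eqref{item:bounded-dense-regular} has a genuine gap: you claim that \( S \cap [0,n] \) can be written as \( \{\upsilon + t_m : m \in \mathbb{N}\} \) with \( t_m \to 0 \), i.e.\ that it is infinite with a limit point. A bounded set generating a dense subgroup need not be infinite \textemdash{} \( S = \{1, \sqrt{2}\} \) already does it \textemdash{} and for a finite \( S \cap [0,n] \) no such representation exists, so Theorem~\ref{thm:argitrarily-large-regular-blocks} (which needs a strictly monotone sequence \( t_m \to 0 \)) does not apply. The paper therefore splits into two cases: if \( S \cap [0,n] \) contains two rationally independent reals \( \alpha, \beta \), it invokes the two-valued theorem of \cite{slutsky2015} (quoted in the introduction) to get an \( \{\alpha,\beta\} \)-regular cross section; only when every pair is rationally dependent must \( S \cap [0,n] \) be infinite (a finite pairwise dependent set generates a cyclic group, not a dense one), and then the limit-point argument runs. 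Your proposal omits the first case entirely. Moreover, your primary plan for the infinite case \textemdash{} strengthening Theorem~\ref{thm:argitrarily-large-regular-blocks} so that ``no non-\( S \) gap survives'' \textemdash{} is unsupported and is not what the construction gives; the paper instead keeps the cross section \( \mathcal{D} \) as produced and splits its orbits into those forming a single \( \oer[S]{\mathcal{D}} \)-class (already done), those containing an infinite class (a smooth part, where any cross section exists), and those with all classes finite, on which the set of class endpoints is a \emph{sparse} cross section so that Theorem~\ref{thm:regular-S-sections-for-sparse-flows} finishes. Your fallback points in this direction but misstates it: every orbit has arbitrarily large blocks, so there is no piece of ``orbits not covered by arbitrarily large blocks''; the sparse object is the endpoint cross section on the all-finite-classes part, and the infinite-class orbits require the separate smoothness observation.

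A smaller, repairable slip in item \eqref{item:lambda-regular}: ``keeping every \( k_0 \)-th point'' of \( \mathcal{D} \) amounts to a Borel choice of phase in \( \mathbb{Z}/k_0\mathbb{Z} \) along each orbit, i.e.\ an eigenfunction-type datum that can fail to exist in the Borel category (this is precisely the kind of obstruction behind Proposition~\ref{prop:Ambrose} and the closing example). You do not need it: take any Borel sub cross section of \( \mathcal{D} \) with all gaps at least \( N\lambda \) (a standard marker-type fact); its gaps automatically lie in \( \lambda\mathbb{Z} \cap [N\lambda, \infty) \subseteq \mathcal{T}(S) \), which is exactly the paper's step, and then your re-tiling via a fixed Borel decomposition map is fine. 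With that replacement, your arguments for items \eqref{item:lambda-regular} and \eqref{item:dense-regular} match the paper's, including the issue you correctly flag about unbounded versus bi-infinitely unbounded gaps, which the paper resolves by observing that the orbits unbounded on only one side form a smooth set on which \( \mathcal{C} \) may be modified.
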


\begin{proof}
  \eqref{item:lambda-regular}  Suppose \( \mff \) admits an \( S \)-regular cross section, say \(
  \mathcal{C} \).  Since \( \langle S \+ \rangle = \lambda \mathbb{Z} \), every element of \( S \) is a
   multiple of \( \lambda \), so we may tile all the gaps in \( \mathcal{C} \) by intervals of
   length \( \lambda \).  More precisely,
   \[ \mathcal{D} = \bigl\{ x + k\lambda : x \in \mathcal{C}, \rgap[\mathcal{C}](x) = n\lambda, 0
   \le k < n\bigr\} \]
   is a \( \{\lambda\} \)-regular cross section.

   Suppose now \( \mff \) admits a \( \{\lambda\} \)-regular cross section, say \( \mathcal{D} \).
   It is easy to check that there exists \( N \in \mathbb{N} \) such that
   \( n\lambda \in \tileable(S) \) for all \( n \ge N \). Let \( \mathcal{C}' \) be a sub cross
   section of \( \mathcal{D} \) such that \( \rgap[\mathcal{C}'](x) \ge N\lambda \) for all
   \( x \in \mathcal{C}' \).  We have that \( \rgap[\mathcal{C}'](x) \in \tileable(S) \) for all
   \( x \in \mathcal{C}' \), and so each gap in \( \mathcal{C}' \) can be tiled by intervals of
   lengths in \( S \), which results in an \( S \)-regular cross section.

   \eqref{item:dense-regular} First suppose that \( X \) admits a decomposition into invariant
   pieces of the form
   \[ X = \Bigl(\bigsqcup_{i=0}^{\infty} X_{i}\Bigr) \sqcup X_{\infty}. \]
   Since \( \langle S \+ \rangle \) is dense in \( \mathbb{R} \) and \( \mff|_{X_{\infty}} \) is
   sparse, by Theorem \ref{thm:regular-S-sections-for-sparse-flows} \( \mff|_{X_{\infty}} \) admits
   an \( S \)-regular cross section \( \mathcal{C}_{\infty} \).  By assumption, \( \mff|_{X_{i}} \)
   admits a \( \{\lambda_{i}\} \)-regular cross section, so by item \eqref{item:lambda-regular} it also
   admits an \( S\cap [0, i] \)-regular cross section \( \mathcal{C}_{i} \).  The union 
   \[ \mathcal{C}_{\infty} \sqcup \bigsqcup_{i\in \mathbb{N}} \mathcal{C}_{i} \]
   of these cross sections is an \( S \)-regular cross section on \( X \).

   For the other direction suppose \( \mathcal{C} \) is an \( S \)-regular cross section for \( \mff
   \).  Let \( X_{\infty} \) be the set of orbits where the gap function is unbounded:
   \[ X_{\infty} = \Bigl\{x \in \mathcal{C} :
   \sup\bigl\{\rgap[\mathcal{C}]\bigl(\phi^{k}_{\mathcal{C}}(x)\bigr) : k \in \mathbb{Z} \bigr\} =
   \infty \Bigr\}. \]
   Some orbits in \( X_{\infty} \) may not have ``bi-infinitely'' unbounded gaps, but the restriction
   of the flow onto the set of such orbits is smooth; we may thus modify \( \mathcal{C} \) on this set
   and assume that \( \mathcal{C} \cap X_{\infty} \) is always ``bi-infinitely'' unbounded, and is
   therefore a sparse cross section.  Thus \( \mff|_{X_{\infty}} \) is sparse.  Let for \( i \in
   \mathbb{N} \)
      \[ X_{i+1} = \Bigl\{x \in \mathcal{C} :
   \sup\bigl\{\rgap[\mathcal{C}]\bigl(\phi^{k}_{\mathcal{C}}(x)\bigr) : k \in \mathbb{Z} \bigr\} \le i+1
   \Bigr\} \setminus X_{i}, \]
   where \( X_{0} = \es \).  By assumption there is \( \lambda_{i} \in \mathbb{R}^{\ge 0} \) such
   that \( \langle S \cap [0,i] \rangle = \lambda_{i}\mathbb{Z} \).  Since all the gaps in \(
   \mathcal{C} \cap X_{i} \) belong to \( S \cap [0,i] \),  item
   \eqref{item:lambda-regular} applies, and \( \mff|_{X_{i}} \) admits a \( \{ \lambda_{i} \} \)-regular
   cross section.

   \eqref{item:bounded-dense-regular} Suppose \( \langle S \cap [0,n] \rangle \) is dense in
   \( \mathbb{R} \).  We may assume for notational convenience that \( S \) itself is bounded and
   \( \langle S \+ \rangle \) is dense in \( \mathbb{R} \).  For a bounded subset of
   \( \mathbb{R} \) to generate a dense subgroup, one of two things has to happen.  One possibility
   is that \( S \) contains two rationally independent reals \( \alpha, \beta \in S \).  If this is
   the case, Theorem 9.1 of \cite{slutsky2015} applies and generates an
   \( \{\alpha, \beta\} \)-regular cross section for \( \mff \).

   The other possibility is that there are infinitely many elements in \( S \).  In that case we may
   select a limit point \( \upsilon \) for \( S \).  While \( \upsilon \) is not necessarily an
   element of \( S \), there is a sequence \( (s_{n})_{n=0}^{\infty} \subseteq S \), which we may
   assume to be monotone, such that \( s_{n} \to \upsilon \).  We therefore find ourselves in the
   context of Theorem \ref{thm:argitrarily-large-regular-blocks} for \( t_{m} = s_{m} - \upsilon \),
   which ensures existence of a cross section \( \mathcal{D} \) with arbitrarily large
   \( \oer[S]{\mathcal{D}} \)-classes within each orbit.  Orbits in \( \mathcal{D} \) split into
   three categories:
   \( \mathcal{D} = \mathcal{D}_{r} \sqcup \mathcal{D}_{0} \sqcup \mathcal{D}_{s} \), where
   \begin{itemize}
   \item \( D_{r} \) consists from those orbits which constitute a single
     \( \oer[S]{\mathcal{D}} \)-class;
   \item \( \mathcal{D}_{0} \) contains orbits which have at least two
     \( \oer[S]{\mathcal{D}} \)-classes at least one of which is infinite;
   \item \( \mathcal{D}_{s} \) draws all the
     orbits with all \( \oer[S]{\mathcal{D}} \)-classes being finite.
   \end{itemize}
   More formally, sets \( D_{r} \), \( D_{0} \), and \( D_{s} \) are given by
   \begin{displaymath}
     \begin{aligned}
       \mathcal{D}_{r} =& \bigl\{ x \in \mathcal{D}: x\, \oer[S]{\mathcal{D}}
       \phi^{k}_{\mathcal{D}}(x) \textrm{ for all } k \in \mathbb{Z} \bigr\},\\
       \mathcal{D}_{0}^{*} =& \bigl\{ x \in \mathcal{D} : \exists k \in \mathbb{Z}\ \forall n \in
       \mathbb{N}\quad \phi^{k}_{\mathcal{D}}(x)\, \oer[S]{\mathcal{D}} \,
       \phi^{k+n}_{\mathcal{D}}(x)\bigr\} \cup\\ & \bigl\{ x \in \mathcal{D} : \exists k \in
       \mathbb{Z}\ \forall n \in \mathbb{N}\quad \phi^{k}_{\mathcal{D}}(x)\, \oer[S]{\mathcal{D}} \,
       \phi^{k-n}_{\mathcal{D}}(x)\bigr\},\\
       \mathcal{D}_{0} =& \mathcal{D}_{0}^{*} \setminus \mathcal{D}_{r},\\
       \mathcal{D}_{s} =& \bigl\{ x \in \mathcal{D} : \forall k \in \mathbb{Z}\ \exists m, n \in
       \mathbb{Z}\ (m < 0) \textrm{ and } (n > 0) \textrm { and },\\
       & \neg\bigl( \phi_{\mathcal{D}}^{k}(x)\, \oer[S]{\mathcal{D}} \phi^{k+m}_{\mathcal{D}}(x)
       \bigr) \textrm{ and } \neg\bigl( \phi_{\mathcal{D}}^{k}(x)\,
       \oer[S]{\mathcal{D}} \phi^{k+n}_{\mathcal{D}}(x) \bigr)\bigr\}.\\
     \end{aligned}
   \end{displaymath}
   Set \( X_{r} \), \( X_{0} \), and \( X_{s} \) to be the saturation of sets \( \mathcal{D}_{r} \),
   \( \mathcal{D}_{0} \), and \( \mathcal{D}_{s} \):
   \[ X_{r} = \mathcal{D}_{r} + \mathbb{R}, \qquad X_{0} = \mathcal{D}_{0} + \mathbb{R}, \qquad X_{s}
   = \mathcal{D}_{s} + \mathbb{R}. \]
   The set \( \mathcal{D}_{r} \) is an \( S \)-regular cross section for \( \mff|_{X_{r}} \).  The
   restriction of \( \mff \) onto \( X_{0} \) is smooth, as taking finite endpoints of infinite
   \( \oer[S]{\mathcal{D}_{0}} \)-classes picks at most two points from each orbit.  The flow
   \( \mff|_{X_{0}} \) therefore admits any kind of cross section.  It remains to deal with the
   restriction of \( \mff \) on \( X_{s} \).  Let \( \mathcal{C} \subseteq \mathcal{D}_{s} \) to
   consist of endpoints of \( \oer[S]{\mathcal{D}_{s}} \)-classes:
   \[ \mathcal{C} = \bigl\{ x \in \mathcal{D}_{s} : \neg\bigl( x\, \oer[S]{\mathcal{D}_{s}}\,
     \phi_{\mathcal{D}_{s}}(x)\bigr) \textrm{ or } \neg\bigl( x\, \oer[S]{\mathcal{D}_{s}}\,
     \phi^{-1}_{\mathcal{D}_{s}}(x)\bigr) \bigr\}. \]
   The condition of having arbitrarily large \( \oer[S]{\mathcal{D}_{s}} \)-classes ensures that
   \( \mathcal{C} \) is a {\it sparse\/} cross section for \( \mff|_{X_{s}} \).
   Theorem~\ref{thm:regular-S-sections-for-sparse-flows} applies and finishes the proof.
\end{proof}

\begin{remark}
  \label{rem:ergodic-version-of-main-thm}
  In the ergodic theoretical framework, when two flows that differ on a set of measure zero are
  identified, items \eqref{item:dense-regular} and \eqref{item:bounded-dense-regular} collapse.
  This is because every flow that preserves a finite measure is sparse on an invariant set of full
  measure (see \cite[Theorem 3.3]{slutsky2015}).  For an ergodic theorist any free flow admits an
  \( S \)-regular cross section whenever \( S \) generates a dense subgroup of \( \mathbb{R} \).
\end{remark}

In conclusion we would like to give an example of a flow which illustrates the difference
between items \eqref{item:dense-regular} and \eqref{item:bounded-dense-regular} above in the Borel
setting.  Let \( \sigma : 2^{\mathbb{N}} \to 2^{\mathbb{N}} \) be the odometer map: if
\( x \in 2^{\mathbb{N}} \) is such that \( x = 1^{n}0 * \), then \( \sigma(x) = 0^{n} 1 * \); also
\( \sigma(1^{\infty}) = 0^{\infty} \).  This is a free Borel automorphisms on the Cantor space.  

\begin{proposition}
  Let \( S \subseteq \mathbb{R}^{>0} \) be a non-empty set of positive reals bounded away from zero
  such that
  \begin{itemize}
  \item \( S \) generates a dense subgroup of \( \mathbb{R} \).
  \item All elements in \( \mathbb{R} \) are pairwise rationally dependent, i.e., \( \langle S\+
    \rangle = 
    \beta \mathbb{Q} \) for some \( \beta \in \mathbb{R}^{>0} \).
  \item \( S \cap [0,n] \) is finite for every \( n \in \mathbb{N} \).
  \end{itemize}
  A typical example is the set of partial sums of the harmonic series:
  \[ S = \Bigl\{ \sum_{i=1}^{n} \frac{1}{i} : n\ge 1\Bigr\}. \]
  Let \( \alpha \in \mathbb{R}^{>0} \) be any real that is rationally independent form \( \beta \).
  Let \( \mff \)  be the flow under the
  constant function \( \alpha \) over the Cantor space \( 2^{\mathbb{N}} \) with odometer
  \( \sigma \) as the base automorphism.  Such a flow does not admit an \( S \)-regular
  cross section.
\end{proposition}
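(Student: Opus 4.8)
Suppose towards a contradiction that \( \mff \) admits an \( S \)-regular cross section. The hypotheses on \( S \) place us in case \eqref{item:dense-regular} of Theorem~\ref{thm:main-theorem}: \( \langle S\+\rangle=\beta\mathbb{Q} \) is dense in \( \mathbb{R} \), and since each \( S\cap[0,n] \) is finite and consists of pairwise rationally dependent reals, \( \langle S\cap[0,n]\rangle \) is infinite cyclic or trivial, say \( \langle S\cap[0,n]\rangle=\lambda_{n}\mathbb{Z} \) with \( \lambda_{n}\in\beta\mathbb{Q} \), \( \lambda_{n}\ge 0 \). Writing \( X=2^{\mathbb{N}}\times[0,\alpha)/{\sim} \) for the phase space of \( \mff \), Theorem~\ref{thm:main-theorem}\eqref{item:dense-regular} then produces an \( \mff \)-invariant Borel partition \( X=\bigl(\bigsqcup_{i}X_{i}\bigr)\sqcup X_{\infty} \) with \( \mff|_{X_{\infty}} \) sparse and \( \mff|_{X_{i}} \) admitting a \( \{\lambda_{i}\} \)-regular cross section (for the finitely many \( i \) with \( S\cap[0,i]=\es \) we have \( \lambda_{i}=0 \) and hence \( X_{i}=\es \)).

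Next I would push the cross sections of the pieces \( \mff|_{X_{i}} \) down to the odometer. Since \( X_{i} \) is \( \mff \)-invariant, it is the suspension over the \( \sigma \)-invariant Borel set \( B_{i}=\{x\in 2^{\mathbb{N}}: (x,0)\in X_{i}\} \). Feeding a \( \{\lambda_{i}\} \)-regular cross section of \( \mff|_{X_{i}} \) through Proposition~\ref{prop:Ambrose} yields a Borel \( f_{i}\colon X_{i}\to\mathbb{C}\setminus\{0\} \) with \( f_{i}(\omega+r)=e^{2\pi i r/\lambda_{i}}f_{i}(\omega) \); restricting \( f_{i} \) to the base \( B_{i}\times\{0\} \) and using that the time-\( \alpha \) map sends \( (x,0) \) to \( (\sigma x,0) \) gives a Borel \( h_{i}\colon B_{i}\to\mathbb{C}\setminus\{0\} \) with \( h_{i}\circ\sigma=e^{2\pi i\alpha/\lambda_{i}}h_{i} \). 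Here the arithmetic enters: \( \lambda_{i}\in\beta\mathbb{Q} \) and \( \alpha \) is rationally independent from \( \beta \), so \( \alpha/\lambda_{i} \) is irrational and \( e^{2\pi i\alpha/\lambda_{i}} \) is not a root of unity. But the dyadic odometer, equipped with the \( (\tfrac12,\tfrac12) \)-Bernoulli measure \( \mu \), is ergodic with pure point spectrum equal to the group of \( 2 \)-power roots of unity, so it has no measurable eigenfunction with eigenvalue \( e^{2\pi i\alpha/\lambda_{i}} \). As \( |h_{i}| \) is \( \sigma \)-invariant and nowhere zero, were \( \mu(B_{i})>0 \) ergodicity would force \( \mu(B_{i})=1 \) and make \( h_{i}/|h_{i}| \) a unimodular measurable eigenfunction with a forbidden eigenvalue; hence \( \mu(B_{i})=0 \) for every \( i \).

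Consequently \( \mu\bigl(\bigsqcup_{i}B_{i}\bigr)=0 \), so \( B_{\infty} \) is \( \mu \)-conull and \( X_{\infty} \) carries the unique \( \mff \)-invariant probability measure \( m=\mu\times(\mathrm{Leb}/\alpha) \) with \( m(X_{\infty})=1 \). It remains to contradict sparseness of \( \mff|_{X_{\infty}} \), and this is the crux: I must show that the odometer suspension flow, restricted to a conull invariant Borel set, admits no cross section with bi-infinitely unbounded gaps on every orbit. A putative sparse cross section of \( \mff|_{X_{\infty}} \) projects, along the flow, to a complete Borel section \( A\subseteq B_{\infty} \) of \( \sigma \) whose return times are bi-infinitely unbounded on every orbit; I would then analyse the nested complete sections \( U_{L}=\{x\in B_{\infty}: x,\sigma x,\dots,\sigma^{L}x\notin A\} \) (with \( \bigcap_{L}U_{L}=\es \) by bi-infinitude) together with the ``gap-start'' sets \( W_{L} \), whose translates \( \sigma^{j}(W_{L}) \), \( 0\le j\le L \), are disjoint subsets of the complement of \( A \). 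Combining this with the uniform rigidity of the dyadic odometer — \( \sigma^{2^{n}} \) tends to the identity uniformly, so \( \mu(A\,\triangle\,\sigma^{2^{n}}A)\to 0 \) for every Borel \( A \) — and with the fact that returns to the level-\( n \) cylinder \( \{x:x_{0}=\dots=x_{n-1}=0\} \) occur with exact period \( 2^{n} \) along every orbit, one should be able to force the return times to \( A \) to stay bounded along a nonempty (indeed conull) set of orbits, contradicting sparseness.

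I expect this final step — that the odometer flow fails to be sparse on a conull invariant set — to be the genuine obstacle; the preceding reductions are routine once one has the Ambrose criterion of Proposition~\ref{prop:Ambrose} and the classical description of the odometer's spectrum. It is exactly here that all three hypotheses on \( S \) are used (density of \( \langle S\+\rangle \) to land in case \eqref{item:dense-regular}, the containment \( \langle S\+\rangle\subseteq\beta\mathbb{Q} \) with \( \beta \) rationally independent from \( \alpha \) for the spectral obstruction, and finiteness of each \( S\cap[0,n] \) so that \( \langle S\cap[0,n]\rangle \) is cyclic), and it is also the point at which the Borel statement departs from the ergodic-theoretic one of Remark~\ref{rem:ergodic-version-of-main-thm}, where \( m(X_{\infty})=1 \) would already close the argument.
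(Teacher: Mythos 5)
Your reductions up to the crux are fine, but the step you yourself flag as the ``genuine obstacle'' is not merely unproved --- it is false, and no amount of work on return times will rescue it. After discarding the pieces \( X_{i} \) you are left with exactly the information ``\( X_{\infty} \) is conull and \( \mff|_{X_{\infty}} \) is sparse,'' and this is perfectly consistent: by \cite[Theorem 3.3]{slutsky2015}, quoted in Remark~\ref{rem:ergodic-version-of-main-thm}, every finite measure-preserving flow (in particular this suspension over the odometer) \emph{is} sparse on an invariant set of full measure, i.e., there exist conull invariant Borel sets carrying cross sections with bi-infinitely unbounded gaps on every orbit. So your plan to show that ``the odometer suspension flow, restricted to a conull invariant Borel set, admits no sparse cross section'' contradicts a theorem the paper itself relies on; uniform rigidity and periodic returns to cylinders can only control behaviour up to a null set, and the obstruction in this example lives precisely on the null set you threw away when you passed to the Bernoulli measure. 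This is the same reason the ergodic-theoretic statement of Remark~\ref{rem:ergodic-version-of-main-thm} collapses items \eqref{item:dense-regular} and \eqref{item:bounded-dense-regular}: a purely measure-theoretic argument cannot distinguish this flow from a sparse one.

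The paper's proof avoids this by working with Baire category instead of measure. One gives \( \Omega = 2^{\mathbb{N}}\times[0,\alpha] /\!\sim \) its compact topology, notes the flow is minimal, and invokes Proposition 3.2 of \cite{slutsky2015}: for the assumed \( S \)-regular cross section \( \mathcal{C} \) there is an invariant \emph{comeager} Borel set \( Z \) on which the gaps of \( \mathcal{C} \) are bounded by some \( n_{0} \). Finiteness and pairwise rational dependence of \( S\cap[0,n_{0}] \) give \( \langle S\cap[0,n_{0}]\rangle=\lambda\mathbb{Z} \), item \eqref{item:lambda-regular} of Theorem~\ref{thm:main-theorem} produces a \( \{\lambda\} \)-regular cross section of \( \mff|_{Z} \), and Proposition~\ref{prop:Ambrose} yields a Borel eigenfunction \( f \) on \( Z \). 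The spectral obstruction is then run \emph{generically}: \( f \) restricted to the base is continuous on a comeager \( \sigma \)-invariant set, and the rigidity \( \sigma^{2^{m}}\to\mathrm{id} \) forces \( 2^{m}\alpha/\lambda\to 0 \pmod{\mathbb{Z}} \), impossible for the irrational \( \alpha/\lambda \). Your eigenvalue idea is essentially the right one, but to make it work you must replace the Bernoulli measure by category throughout (using that invariant Borel sets of a minimal flow are meager or comeager, so each \( X_{i} \) is meager and \( X_{\infty} \) would be comeager, where generic boundedness of gaps again contradicts sparseness); as written, the argument does not go through.
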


\begin{figure}[htb]
  \centering
  \begin{tikzpicture}
    \draw[thick] (0,0) -- (4,0);
    \draw (0,2) -- (4,2);
    \draw (2,1) node {\( \Omega = 2^{\mathbb{N}} \times [0,\alpha) \)};
    \draw (3.5,2.2) node {\( f \equiv \alpha \)};
    \draw (3.1, 0.25) node {\( \sigma : 2^{\mathbb{N}} \to 2^{\mathbb{N}} \)};
  \end{tikzpicture}
  \caption{The flow \( \mff \) under the function \( f \equiv \alpha \) with the base automorphism
    \( \sigma \).}
  \label{fig:flow-under-alpha}
\end{figure}
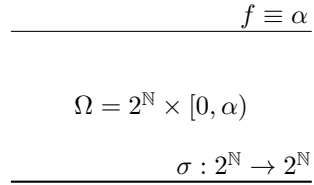

\begin{proof}
  The proof is by contradiction.  Set \( \Omega = 2^{\mathbb{N}} \times [0,\alpha) \) and suppose
  that \( \mathcal{C} \subseteq \Omega \) is an \( S \)-regular cross section for \( \mff \).  The
  space \( \Omega \) can naturally be endowed with a compact topology which turns \( \mff \) into a
  continuous flow on a compact metric space.  Indeed,
  \( \Omega = 2^{\mathbb{N}} \times [0, \alpha] /\! \sim \), where \( \sim \) identifies
  \( (x,\alpha) \) with \( \bigl(\sigma(x), 0\bigr) \).  Since \( \sim \) is closed, the factor
  topology turns \( \Omega \) into a compact metric space, and the flow \( \mff \) is seen to be
  continuous.  Moreover, since \( \sigma \) is a minimal\footnote{A homeomorphism is minimal if
    every its orbit is dense. } homeomorphism of the Cantor space, one easily checks that \( \mff \)
  on \( \Omega \) is also minimal.  By Proposition 3.2 in \cite{slutsky2015}, there is a Borel
  invariant comeager subset \( Z \subseteq \Omega \) such that \( Z \cap \mathcal{C} \) has all
  gaps bounded by some \( n_{0} \in \mathbb{N} \).  Since \( S \cap [0,n_{0}] \) is finite, and since all
  elements in \( S \cap [0,n_{0}] \) are rationally dependent, there is
  \( \lambda \in \mathbb{R}^{>0} \) such that
  \( \langle S \cap [0,n_{0}] \rangle = \lambda \mathbb{Z} \).  By item \eqref{item:lambda-regular}
  of Theorem \ref{thm:main-theorem} this means that there is a \( \{\lambda\} \)-regular cross
  section \( \mathcal{D} \subseteq Z \) for the flow \( \mff|_{Z} \).  By Ambrose's criterion for
  existence of a \( \{\lambda\} \)-regular cross section (see Proposition \ref{prop:Ambrose}) there
  exists a Borel function \( f : Z \to \mathbb{C}\setminus\{0\} \) such that
  \[ f(x + r) = e^{\textstyle \frac{2\pi i r}{\lambda}} f(x) \textrm{ for all } x \in Z \textrm{ and
    } r \in \mathbb{R}. \]
  Let \( X = \mathrm{proj}_{2^{\mathbb{N}}}(Z) \).  Since \( Z \) is \( \mff \)-invariant, 
  \[ X = Z \cap \bigl\{(x, 0) : x \in 2^{\mathbb{N}}\bigr\}. \]
  Note that \( X \) must be Borel, \( \sigma \)-invariant, and comeager in \( 2^{\mathbb{N}} \).  We
  restrict the function \( f \) to the base \( 2^{\mathbb{N}}\times \{0\} \).  Since \( f \) is
  Borel, there is a comeager subset \( \tilde{X} \) of \( 2^{\mathbb{N}} \times \{0\} \) such that
  \( f|_{\tilde{X}} \) is continuous (see \cite[8.38]{kechris_classical_1995}).  Without loss of
  generality we may assume that \( \tilde{X} \subseteq X \) and that \( \tilde{X} \) is
  \( \sigma \)-invariant.  Pick \( x_{0} \in \tilde{X} \).  Since \( x + \alpha = \sigma(x) \) for
  all \( x \in 2^{\mathbb{N}} \),
  for any \( k \in \mathbb{N} \) we have
  \[ f\bigl(\sigma^{k}(x_{0})\bigr) = f(x_{0} + k\alpha) = e^{\textstyle \frac{2 \pi i
      k\alpha}{\lambda}}f(x_{0}). \]
  We take \( k = 2^{m} \) in the above.  Since \( \sigma^{2^{m}}(x) \to x \) for all
  \( x \in 2^{\mathbb{N}} \), and since \( f \) is continuous on \( \tilde{X} \) which is
  \( \sigma \)-invariant, we get
  \[ e^{\textstyle \frac{2 \pi i 2^{m}\alpha}{\lambda}}f(x_{0}) \to f(x_{0}) \textrm{ as } m \to \infty,\]
  which is equivalent to \( 2^{m} \alpha/\lambda \to 0 \mod \mathbb{Z} \), because
  \( f(x_{0}) \ne 0 \).  By assumption on \( S \), \( \alpha/\lambda \) is an irrational number,
  thus to finish the proof it remains to show that \( 2^{m}\gamma \not \to 0 \mod \mathbb{Z} \) for
  any irrational \( \gamma \in \mathbb{R}^{>0}\).

  Suppose towards a contradiction that \( 2^{m}\gamma \to 0 \mod \mathbb{Z} \) for an irrational
  \( \gamma \).  Pick \( m_{0} \) so big that for every \( m \ge m_{0} \) there is
  \( k_{m} \in \mathbb{Z} \) such that \( |2^{m}\gamma - k_{m}| < 1/4 \).  Let
  \( a = 2^{m_{0}}\gamma - k_{m_{0}} \), and let \( p \in \mathbb{N} \) be the smallest natural such
  that \( |2^{p}a| \ge 1/4 \).  It is easy to see that \( |2^{p}a| < 1/2 \).  Therefore,
  \[ 1/4 \le \bigl| 2^{p+m_{0}}\gamma - 2^{p}k_{m_{0}} \bigr| < 1/2. \]
  Thus \( 2^{m} \not \to 0 \mod \mathbb{Z} \) as claimed.
\end{proof}

\bibliographystyle{alpha}
\bibliography{refs}

\end{document}